\providecommand{\algorithmname}{Algorithm}
\begin{document}

\title{Error norm estimates for the block conjugate gradient algorithm\footnotemark[1]}

\author{G\'{e}rard Meurant\footnotemark[2] and Petr Tich\'{y}\footnotemark[3]}

\footnotetext[2] {Paris, France ({\tt gerard.meurant@gmail.com})}

\footnotetext[3]{Faculty of Mathematics and Physics, Charles University,
Sokolovska 83, Prague, 18675, Czech Republic ({\tt petr.tichy@mff.cuni.cz})}

\footnotetext[1]{Version of \today}

\maketitle

\begin{abstract}
In the book [Meurant and Tich\'{y}, SIAM, 2024] we discussed the estimation
of error norms in the conjugate gradient (CG) algorithm for solving
linear systems $Ax=b$ with a symmetric positive definite matrix $A$,
where $b$ and $x$ are vectors. In this paper, we generalize the
most important formulas for estimating the $A$-norm of the error
to the block case. First, we discuss in detail the derivation of various
variants of the block CG (BCG) algorithm from the block Lanczos algorithm.
We then consider BCG and derive the related block Gauss and block
Gauss-Radau quadrature rules. We show how to obtain lower and upper
bounds on the $A$-norm of the error of each system, both in terms
of the quantities computed in BCG and in terms of the underlying block
Lanczos algorithm. Numerical experiments demonstrate the behavior
of the bounds in practical computations.
\end{abstract}

%
%
\begin{keywords}
Block conjugate gradients, error bounds, Gauss-Radau quadrature
\end{keywords}
\begin{AMS}
    65F10, 65G50
\end{AMS}

\section{Introduction}

The block conjugate gradient (BCG) algorithm was introduced by D.P.~O'Leary
\cite{ol1980b} in 1980 for solving linear systems with several right-hand
sides
\begin{equation}
AX=B,\label{eq-AXB}
\end{equation}
where $A$ is a real symmetric positive definite matrix of order $n$,
and $X$ and $B$ are $n\times m$ real matrices with $m\ll n$.

In \cite{ol1980b}, BCG was considered as a special case of the block
biconjugate gradient algorithm that was stated without any derivation.
Then, the mathematical properties of BCG were derived and proved.
We have not found any precise derivation of BCG in the literature, except in \cite{btk2015}.
In this paper, we first show with details how to derive BCG from the
block Lanczos algorithm introduced by J.~Cullum and W.E.~Donath
\cite{cud1974} in 1974 and G.H.~Golub and R.~Underwood \cite{gu1977}
in 1977. We also consider relations between these two algorithms that are useful for
our main goal which is to derive lower and upper bounds
for the $A$-norms of the errors for the approximate solutions corresponding
to each column of the block right-hand side. This is done by considering
block Gauss and block Gauss-Radau quadrature rules. Bounds can also
be obtained for variants of BCG proposed in \cite{ol1980b,dub2001}.

\smallskip{}

In section~\ref{s-sec2} we recall the block Lanczos algorithm. The
BCG algorithm is obtained from the block Lanczos algorithm in section~\ref{s-sec3}.
Relations between the (matrix) coefficients of the block Lanczos
and BCG algorithms are derived in section~\ref{s-sec4}. How to measure
the errors is considered in section~\ref{s-sec5}. Section~\ref{s-sec6}
shows how to use a block Gauss quadrature rule to obtain lower bounds
for the $A$-norms of the errors. Upper bounds are obtained in section~\ref{s-sec7}
from a block Gauss-Radau quadrature rule. The problem of rank deficiency
and an algorithm by A.~Dubrulle \cite{dub2001} are considered in
section~\ref{s-sec8}. Numerical experiments showing the effectiveness
of these techniques for obtaining bounds are discussed in section~\ref{s-sec9}.

\smallskip{}

Matrices and block vectors are denoted by upper case letters. The
$n\times mn$ matrices which are collections of block vectors are
denoted by calligraphic letters. The ``small'' blocks of order $m$
are denoted by upper case Greek letters.

\section{The block Lanczos algorithm}

\label{s-sec2}

The block Lanczos algorithm constructs an orhonormal basis of the
block Krylov subspaces,
\[
\mathcal{K}_{k}(A,V)\equiv\mathrm{colspan}\{V,AV,\dots,A^{k-1}V\},
\]
where $V$ is an $n\times m$ given matrix. Here $\mathrm{colspan}$
denotes the span of the columns of all block vectors $V,AV,\dots,A^{k-1}V$.
To derive the standard block algorithms we will assume that the considered
block Krylov subspaces $\mathcal{K}_{k}(A,V)$ have full dimension
$km$.

Let $V_{0}$ be the $n\times m$ zero matrix and let $V_{1}\Gamma_{0}=V$
be the QR factorization of $V$. The matrices $V_{k}$ (block vectors)
are defined by a block three-term recurrence,
\[
W_{k}=AV_{k}-V_{k}\Omega_{k}-V_{k-1}\Gamma_{k-1}^{T},\quad V_{k+1}\Gamma_{k}=W_{k},
\]
where $\Omega_{k}$ and $\Gamma_{k}$ are square matrices of order
$m$. The rightmost equality is the QR factorization of $W_{k}$. So,
$V_{k+1}$ has orthonormal columns and $\Gamma_{k}$ is upper triangular.
The block vectors are computed such that
\[
V_{i}^{T}V_{j}=0,\ i\ne j,\quad V_{i}^{T}V_{i}=I_{m},
\]
where $I_m$ is the identity matrix of order $m$.
The block Lanczos algorithm is described in Algorithm~\ref{alg-BLanczos}.

\smallskip{}

\begin{algorithm}[th]
\caption{Block Lanczos }
\label{alg-BLanczos}

\begin{algorithmic}[1]

\STATE \textbf{input} $A$, $V$

\STATE $V_{0}=0$

\STATE $V_{1}\Gamma_{0}=V$

\FOR{$k=1,2,\dots$}

\STATE $W=AV_{k}-V_{k-1}\Gamma_{k-1}^{T}$

\STATE $\Omega_{k}=V_{k}^{T}W$

\STATE $W_{k}=W-V_{k}\Omega_{k}$

\STATE $V_{k+1}\Gamma_{k}=W_{k}$

\ENDFOR

\end{algorithmic}
\end{algorithm}

We stack the block vectors $V_{i}$ in a matrix
\[
{\cal V}_{k}=\left(V_{1},V_{2,}\dots,V_{k}\right).
\]
Then, we can write the recurrences, up to block vector $V_{k+1}$
as
\begin{equation}
A{\cal V}_{k}={\cal V}_{k}T_{k}+{\cal G}_{k},\quad{\cal G}_{k}=V_{k+1}\Gamma_{k}E_{k}^{T},\label{eq-brecur}
\end{equation}
where $E_{k}$ is the $km\times m$ block vector which is zero except
for the last block which is~$I_m$. Using
the Knonecker product notation, we can also define
\begin{equation}
E_{j}\equiv e_{j}\otimes I_{m},\quad j=1,\dots,k,\label{def:E}
\end{equation}
where $e_{j}\in\mathbb{R}^{k}$ is the $j$th column of the identity
matrix $I_{k}$. In relation~\eqref{eq-brecur}, $T_{k}$ is a symmetric
block tridiagonal matrix of order $km$,
\[
T_{k}=\begin{pmatrix}\Omega_{1} & \Gamma_{1}^{T}\\
\Gamma_{1} & \ddots & \ddots\\
 & \ddots & \ddots & \Gamma_{k-1}^{T}\\
 &  & \Gamma_{k-1} & \Omega_{k}
\end{pmatrix}.
\]
Since we assume that $A$ is positive definite, $T_{k}$ is positive
definite as well, and we can factorize the matrix $T_{k}$ as
\begin{equation}
T_{k}=\widetilde{L}_{k}\begin{pmatrix}\Delta_{1}^{-1}\\
 & \ddots\\
 &  & \Delta_{k}^{-1}
\end{pmatrix}\widetilde{L}_{k}^{T}=L_{k}\widetilde{L}_{k}^{T}\label{eq:LU}
\end{equation}
with
\[
\widetilde{L}_{k}=\begin{pmatrix}\Delta_{1}\\
\Gamma_{1} & \ddots\\
 & \ddots & \ddots\\
 &  & \Gamma_{k-1} & \Delta_{k}
\end{pmatrix},\quad L_{k}=\begin{pmatrix}I_{m}\\
\Pi_{1} & \ddots\\
 & \ddots & \ddots\\
 &  & \Pi_{k-1} & I_{m}
\end{pmatrix},
\]
where $\Pi_{j}=\Gamma_{j}\Delta_{j}^{-1},$ $j=1,\dots,k-1$. The
symmetric and positive definite matrices $\Delta_{i}\in\mathbb{R}^{m\times m}$
are computed as
\begin{equation}
\Delta_{1}=\Omega_{1},\quad\Delta_{j}=\Omega_{j}-\Gamma_{j-1}\Delta_{j-1}^{-1}\Gamma_{j-1}^{T},\ \ j=2,\dots,k.\label{eq:LDLT}
\end{equation}

We can now use the block Lanczos algorithm to solve a system of linear
equations with several right-hand sides $B$ using the orthogonal
residual approach. In general, we do not need the assumption that
$A$ is positive definite, but we need to assume that $T_{k}$ is
nonsingular. We can consider an approximate solution $X_{k}\in\mathbb{R}^{n\times m}$
of equation~\eqref{eq-AXB} in the form
\begin{equation}
X_{k}=X_{0}+{\cal V}_{k}Z_{k},\quad X_{0}\in\mathbb{R}^{n\times m}\textnormal{\ given},\label{eq-defX}
\end{equation}
where $Z_{k}\in\mathbb{R}^{km\times m}$ is to be determined. We will
require that ${\cal V}_{k}^{T}R_{k}=0$, where $R_{k}$ is the residual
block vector $R_{k}=B-AX_{k}$ that can be computed as
\[
R_{k}=R_{0}-A{\cal V}_{k}Z_{k},\quad R_{0}=B-AX_{0}.
\]
Using relation~\eqref{eq-brecur}, we obtain
\[
R_{k}=R_{0}-{\cal V}_{k}T_{k}Z_{k}-{\cal G}_{k}Z_{k}.
\]
Left multiplication with ${\cal V}_{k}^{T}$ yields
\[
0={\cal V}_{k}^{T}R_{0}-T_{k}Z_{k}.
\]
The last term ${\cal V}_{k}^{T}{\cal G}_{k}Z_{k}$ is zero because
of the orthogonality relations.

It is convenient to define $V_{1}$ in relation to $R_{0}$. With
our hypothesis $R_{0}$ is of full column rank, i.e., $R_{0}^{T}R_{0}$
is symmetric positive definite. So, we can define a factorization
\[
R_{0}^{T}R_{0}=\Phi_{0}^{T}\Phi_{0},
\]
and $V_{1}=R_{0}\Phi_{0}^{-1}$. The matrix $\Phi_{0}$ of order $m$
can be an upper Cholesky factor obtained from a QR factorization of
$R_{0}$, or, a square root of $R_{0}^{T}R_{0}$. Then,
\[
{\cal V}_{k}^{T}R_{0}=\begin{pmatrix}V_{1}^{T}R_{0}\\
0\\
\vdots\\
0
\end{pmatrix}=\begin{pmatrix}\Phi_{0}\\
0\\
\vdots\\
0
\end{pmatrix}=E_{1}\Phi_{0},
\]
where $E_{1}$ is defined as in \eqref{def:E}. With the choice $V_{1}=R_{0}\Phi_{0}^{-1}$,
the block vector $Z_{k}$ in \eqref{eq-defX} is obtained by solving
a block tridiagonal linear system
\[
T_{k}Z_{k}=E_{1}\Phi_{0},
\]
where only the first block of the right-hand side is nonzero. It yields
\begin{equation}
X_{k}=X_{0}+{\cal V}_{k}T_{k}^{-1}E_{1}\Phi_{0},\label{eq-XkV}
\end{equation}
and
\begin{eqnarray*}
R_{k} & = & R_{0}-A{\cal V}_{k}Z_{k}\\
 & = & R_{0}-({\cal V}_{k}T_{k}+{\cal G}_{k})T_{k}^{-1}E_{1}\Phi_{0}\\
 & = & V_{1}\Phi_{0}-{\cal V}_{k}E_{1}\Phi_{0}-{\cal G}_{k}T_{k}^{-1}E_{1}\Phi_{0}\\
 & = & -V_{k+1}\Gamma_{k}[T_{k}^{-1}E_{1}]_{k}\Phi_{0},
\end{eqnarray*}
where $[T_{k}^{-1}E_{1}]_{k}$ is the last block of the first block
column of $T_{k}^{-1}$. Denoting
\[
\Phi_{k}=(-1)^{k-1}\Gamma_{k}[T_{k}^{-1}E_{1}]_{k}\Phi_{0},
\]
we obtain
\begin{equation}
V_{k+1}=(-1)^{k}R_{k}\Phi_{k}^{-1},\quad\mbox{and}\quad R_{k}^{T}R_{k}=\Phi_{k}^{T}\Phi_{k}.\label{eq:Lanczos_res}
\end{equation}
We chose this sign for $\Phi_{k}$ for compatibility with the scalar
case ($m=1$). The relation between the residuals and the basis block
vectors shows that $R_{i}^{T}R_{j}=0$, for $i\ne j$. Note that if
$A$ is positive definite, then it is straightforward to show using
the factorization \eqref{eq:LU} that
\[
[T_{k}^{-1}E_{1}]_{k}=(-1)^{k-1}\Delta_{k}^{-1}\Pi_{k-1}\cdots\Pi_{1}
\]
so that
\begin{equation}
\Phi_{k}=\Pi_{k}\cdots\Pi_{1}\Phi_{0}=\Pi_{k}\Phi_{k-1}.\label{eq:Phi}
\end{equation}

\smallskip{}

For computing the approximate solutions $X_{k}$ with relation~\eqref{eq-XkV},
we have to store all the block vectors $V_{i}$, $i=1,\dots,k$. This
can be avoided when $A$ is positive definite by using the factorization
\eqref{eq:LU} of $T_{k}$, leading to the block conjugate gradient
algorithm discussed in the next section.

\section{Block CG from block Lanczos}

\label{s-sec3}

From now on we assume that $A$ is positive definite. For the derivation
of BCG, we assume that there are no rank deficiency problems.
We will discuss this issue in section~\ref{s-sec8}. First
of all, let us consider a block diagonal matrix
\[
D_{k}=\mathrm{diag}(\Psi_{1},\dots,\Psi_{k})
\]
with nonsingular diagonal blocks $\Psi_{i}$ to be chosen later to
simplify some formulas. By inserting the matrix $D_{k}^{-1}D_{k}$,
we can write the factorization \eqref{eq:LU} of $T_{k}$ as
\[
T_{k}=L_{k}D_{k}^{-1}D_{k}\widetilde{L}_{k}^{T}=L_{k}D_{k}^{-1}U_{k}
\]
with
\[
U_{k}=\begin{pmatrix}\Psi_{1}\Delta_{1} & \Psi_{1}\Gamma_{1}^{T}\\
 & \ddots & \ddots\\
 &  & \ddots & \Psi_{k-1}\Gamma_{k-1}^{T}\\
 &  &  & \Psi_{k}\Delta_{k}
\end{pmatrix}.
\]

We introduce a new $n\times km$ matrix
\[
{\cal P}_{k}=\left(P_{0},P_{1},\dots,P_{k-1}\right)={\cal V}_{k}U_{k}^{-1},
\]
where the block vectors $P_{i}$ are $n\times m$. The block Lanczos
iterates are
\begin{eqnarray*}
X_{k} & = & X_{0}+{\cal V}_{k}T_{k}^{-1}E_{1}\Phi_{0}\\
 & = & X_{0}+{\cal V}_{k}U_{k}^{-1}D_{k}L_{k}^{-1}E_{1}\Phi_{0}\\
 & = & X_{0}+{\cal P}_{k}D_{k}L_{k}^{-1}E_{1}\Phi_{0}.
\end{eqnarray*}
For $k=1$, we get $X_{1}=X_{0}+P_{0}\Psi_{1}\Phi_{0}.$ Since $D_{k}$
is block diagonal and $L_{k}^{-1}$ lower block triangular, for $k>1$ we can
split the second term on the right-hand side in two pieces
${\cal P}_{k-1}D_{k-1}L_{k-1}^{-1}[E_{1}]_{1:k-1}\Phi_{0}$ and $P_{k-1}\Psi_{k}[L_{k}^{-1}E_{1}]_{k}\Phi_{0}$,
where $[L_{k}^{-1}E_{1}]_{k}$ is the last block of $L_{k}^{-1}E_{1}$.
Since the matrix $L_{k}$ is lower block bidiagonal with diagonal
blocks which are identity matrices of order $m$, we obtain
\[
[L_{k}^{-1}E_{1}]_{k}=(-1)^{k-1}\Pi_{k-1}\cdots\Pi_{1}.
\]
Hence, we can write the $k$th iterate as
\[
X_{k}=X_{k-1}+P_{k-1}\Upsilon_{k-1},
\]
with
\[
\Upsilon_{k-1}=(-1)^{k-1}\Psi_{k}\Pi_{k-1}\cdots\Pi_{1}\Phi_{0}=(-1)^{k-1}\Psi_{k}\Phi_{k-1},
\]
where we have used \eqref{eq:Phi} in the last equality. The relation
for $X_{k}$ yields a relation for the block residual vectors,
\begin{equation}
R_{k}=R_{k-1}-AP_{k-1}\Upsilon_{k-1}.\label{eq-Rk}
\end{equation}

Let us choose $\Psi_{k}=(-1)^{k-1}\Phi_{k-1}^{-1}\Delta_{k}^{-1}$.
Then, the coefficient
\begin{equation}
\Upsilon_{k-1}=\Phi_{k-1}^{-1}\Delta_{k}^{-1}\Phi_{k-1}\label{eq-cUpPhi}
\end{equation}
is similar to the inverse of the last diagonal block of the factorization
of $T_{k}$.

Now, we need to show how to compute $P_{k}$. Comparing the last block
columns in ${\cal P}_{k+1}U_{k+1}={\cal V}_{k+1}$ and using \eqref{eq:Lanczos_res}
we obtain
\[
P_{k-1}\Psi_{k}\Gamma_{k}^{T}+P_{k}\Psi_{k+1}\Delta_{k+1}=V_{k+1}=(-1)^{k}R_{k}\Phi_{k}^{-1}.
\]
We multiply from the right by $\Delta_{k+1}^{-1}\Psi_{k+1}^{-1}=(-1)^{k}\Phi_{k}$
to get
\[
P_{k}=R_{k}-P_{k-1}\Psi_{k}\Gamma_{k}^{T}(-1)^{k}\Phi_{k}.
\]
 From the definition of $\Psi_{k}$ and using \eqref{eq:Phi} we obtain
\[
(-1)^{k-1}\Psi_{k}\Gamma_{k}^{T}\Phi_{k}=\Phi_{k-1}^{-1}\Delta_{k}^{-1}\Gamma_{k}^{T}\Phi_{k}=\Phi_{k-1}^{-1}\Phi_{k-1}^{-T}\Phi_{k}^{T}\Phi_{k}.
\]
Finally, using the relation between $R_{k}$ and $\Phi_{k}$, it follows
that
\[
P_{k}=R_{k}+P_{k-1}\Xi_{k},\quad\Xi_{k}=(R_{k-1}^{T}R_{k-1})^{-1}(R_{k}^{T}R_{k}).
\]
Using this relation and the orthogonality of the block residual vectors,
we obtain by induction that $P_{i}^{T}R_{j}=0$, $i<j$. Moreover,
$P_{k-1}^{T}R_{k-1}=R_{k-1}^{T}R_{k-1}$. Observing that
\[
0=P_{k-1}^{T}R_{k}=P_{k-1}^{T}R_{k-1}-P_{k-1}^{T}AP_{k-1}\Upsilon_{k-1},
\]
we can compute the matrix coefficient $\Upsilon_{k-1}$ as
\[
\Upsilon_{k-1}=(P_{k-1}^{T}AP_{k-1})^{-1}(P_{k-1}^{T}R_{k-1})=(P_{k-1}^{T}AP_{k-1})^{-1}(R_{k-1}^{T}R_{k-1}).
\]

The standard BCG algorithm is given as Algorithm~\ref{alg-BCG}.

\smallskip{}

\begin{algorithm}[ht]
\caption{BCG}
\label{alg-BCG}

\begin{algorithmic}[1]

\STATE \textbf{input} $A$, $B$, $X_{0}$

\STATE $R_{0}=B-AX_{0}$

\STATE $P_{0}=R_{0}$

\FOR{$k=1,\dots$ until convergence}

\STATE $\Upsilon_{k-1}=(P_{k-1}^{T}AP_{k-1})^{-1}(R_{k-1}^{T}R_{k-1})$

\STATE $X_{k}=X_{k-1}+P_{k-1}\Upsilon_{k-1}$

\STATE $R_{k}=R_{k-1}-AP_{k-1}\Upsilon_{k-1}$

\STATE $\Xi_{k}=(R_{k-1}^{T}R_{k-1})^{-1}(R_{k}^{T}R_{k})$

\STATE $P_{k}=R_{k}+P_{k-1}\Xi_{k}$

\ENDFOR

\end{algorithmic}
\end{algorithm}

\smallskip{}

Of course, we have assumed that $P_{k-1}^{T}AP_{k-1}$ and $R_{k-1}^{T}R_{k-1}$
are nonsingular. As we wrote above, we will come back to this point in section~\ref{s-sec8}.

\smallskip{}

We observe that D.P.~O'Leary \cite{ol1980b} added to BCG other (scaling)
matrices $\Sigma_{i}$ of order $m$ that can be chosen at our will;
see Algorithm~\ref{alg-BCG-OL}. For instance, in the formula to
compute $P_{k}$, $\Sigma_{k}$ can be the inverse of the R factor
of a QR factorization of $R_{k}+P_{k-1}\Xi_{k}$. Even though Algorithms~\ref{alg-BCG}
and \ref{alg-BCG-OL} must produce the same iterates and residuals
in exact arithmetic, introducing these matrices was intended to improve
the numerical properties of the algorithm.

\smallskip{}

\begin{algorithm}[ht]
\caption{D.P.~O'Leary's BCG (OL-BCG)}
\label{alg-BCG-OL}

\begin{algorithmic}[1]

\STATE \textbf{input} $A$, $B$, $X_{0}$

\STATE $R_{0}=B-AX_{0}$

\STATE $P_{0}=R_{0}\Sigma_{0}$

\FOR{$k=1,\dots$ until convergence}

\STATE $\Upsilon_{k-1}=(P_{k-1}^{T}AP_{k-1})^{-1}\Sigma_{k-1}^{T}(R_{k-1}^{T}R_{k-1})$

\STATE $X_{k}=X_{k-1}+P_{k-1}\Upsilon_{k-1}$

\STATE $R_{k}=R_{k-1}-AP_{k-1}\Upsilon_{k-1}$

\STATE $\Xi_{k}=\Sigma_{k-1}^{-1}(R_{k-1}^{T}R_{k-1})^{-1}(R_{k}^{T}R_{k})$

\STATE $P_{k}=(R_{k}+P_{k-1}\Xi_{k})\Sigma_{k}$

\ENDFOR

\end{algorithmic}
\end{algorithm}

\smallskip{}

A derivation of BCG from orthogonality conditions is described in
S.~Birk's Ph.D.~thesis \cite{btk2015}.

\section{Relations between the coefficients of the block Lanczos and BCG algorithms}

\label{s-sec4}

Even though the relations between the coefficients of block Lanczos
and BCG are stated in \cite{tms2024}, let us see briefly what they
are with the notation of this paper since we will need them in the
next sections.

\smallskip{}

From relation~\eqref{eq-cUpPhi}, we have
\begin{equation}
\Delta_{k}=\Phi_{k-1}\Upsilon_{k-1}^{-1}\Phi_{k-1}^{-1},\label{eq:Delta}
\end{equation}
and from~\eqref{eq:Phi}, the block Lanczos coefficient $\Gamma_{k}$
is
\begin{equation}
\Gamma_{k}=\Phi_{k}\Phi_{k-1}^{-1}\Delta_{k}=\Phi_{k}\Upsilon_{k-1}^{-1}\Phi_{k-1}^{-1}.\label{eq-Gamma}
\end{equation}
For the other block Lanczos coefficient $\Omega_{k}$ we have
\[
\Omega_{k}=\Delta_{k}+\Gamma_{k-1}\Delta_{k-1}^{-1}\Gamma_{k-1}^{T};
\]
see \eqref{eq:LDLT}. Using \eqref{eq:Delta} and \eqref{eq-Gamma},
$\Omega_{k}$ can be written in the form
\begin{equation}
\Omega_{k}=\Phi_{k-1}\Upsilon_{k-1}^{-1}\Phi_{k-1}^{-1}+\Phi_{k-1}\Phi_{k-2}^{-1}\Phi_{k-2}^{-T}\Upsilon_{k-2}^{-1}\Phi_{k-1}^{T}.\label{eq-Omega1}
\end{equation}
Finally, by observing that
\[
\Phi_{k-2}^{-1}\Phi_{k-2}^{-T}=\Xi_{k-1}\Phi_{k-1}^{-1}\Phi_{k-1}^{-T},
\]
we obtain
\begin{equation}
\Omega_{k}=\Phi_{k-1}\Upsilon_{k-1}^{-1}\Phi_{k-1}^{-1}+[\Phi_{k-1}\Xi_{k-1}\Phi_{k-1}^{-1}]\,[\Phi_{k-1}^{-T}\Upsilon_{k-2}^{-1}\Phi_{k-1}^{T}].\label{eq-Omega}
\end{equation}
Up to products with $\Phi_{k-1}$, this is a relation which looks like
what we have for the Lanczos and conjugate gradient (CG) algorithms
when $m=1$ (see \cite{gm2006}) since the three matrices that are
involved in the right-hand side are respectively similar to $\Upsilon_{k-1}^{-1}$,
$\Xi_{k-1}$, and~$\Upsilon_{k-2}^{-1}$.

\section{How to measure the error}

\label{s-sec5}

In CG, it is natural to consider the $A$-norm of the error $\Vert x-x_{k}\Vert_{A}$
since it is minimized at each iteration. In BCG, it is the trace of
\[
\mathfrak{E}_{k}=(X-X_{k})^{T}A(X-X_{k})
\]
which is minimized; see \cite[Theorem~2]{ol1980b}. In more detail,
denoting by $x_{k}^{(i)}$, $r_{k}^{(i)}$, $p_{k}^{(i)}$, $i=1,\dots,m$,
the columns of $X_{k}$, $R_{k}$, $P_{k}$ respectively, it holds
that
\begin{equation}
x_{k}^{(i)}\in x_{0}^{(i)}+\mathcal{K}_{k}(A,R_{0}),\quad r_{k}^{(i)}\perp\mathcal{K}_{k}(A,R_{0}).\label{eq:ortogonality}
\end{equation}
Moreover,
\[
p_{k}^{(i)}\perp_{A}\mathcal{K}_{k}(A,R_{0}).
\]
As a consequence, $x_{k}^{(i)}$ minimizes
\[
(y-x^{(i)})^{T}A(y-x^{(i)})=\Vert y-x^{(i)}\Vert_{A}^{2}
\]
over all vectors $y\in x_{0}^{(i)}+\mathcal{K}_{k}(A,R_{0})$. To
generalize the estimates known from CG \cite{mt2024}, we need the following theorem.

\smallskip{}

\begin{theorem}
\label{t-Thetak} Consider Algorithm~\ref{alg-BCG} and assume that
the block vectors $R_{i}$, $i=0,\dots,k-1$, are of full column rank.
Then it holds that
\begin{equation}
\mathfrak{E}_{k-1}=\Theta_{k-1}+\mathfrak{E}_{k},\label{eq-Theta}
\end{equation}
and the matrix
\begin{equation}
\Theta_{k-1}\equiv(R_{k-1}^{T}R_{k-1})\Upsilon_{k-1}\label{eq:defTheta}
\end{equation}
 is symmetric and positive definite.
\end{theorem}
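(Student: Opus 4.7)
My plan is to expand both error matrices via the update $X_k = X_{k-1} + P_{k-1}\Upsilon_{k-1}$ and then simplify using the two orthogonality identities that fall out of the BCG derivation in Section~\ref{s-sec3}. First I would write
\[
X - X_k = (X-X_{k-1}) - P_{k-1}\Upsilon_{k-1},
\]
substitute this into $\mathfrak{E}_k = (X-X_k)^T A (X-X_k)$ and form the difference
\[
\mathfrak{E}_{k-1} - \mathfrak{E}_k = (X-X_{k-1})^T A P_{k-1}\Upsilon_{k-1} + \Upsilon_{k-1}^T P_{k-1}^T A(X-X_{k-1}) - \Upsilon_{k-1}^T P_{k-1}^T A P_{k-1}\Upsilon_{k-1}.
\]

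Next I would eliminate $X - X_{k-1}$ by using $A(X-X_{k-1}) = R_{k-1}$ (symmetry of $A$) together with the identity $P_{k-1}^T R_{k-1} = R_{k-1}^T R_{k-1}$ that was established in Section~\ref{s-sec3} (it follows from $P_{k-1} = R_{k-1} + P_{k-2}\Xi_{k-1}$ together with $P_{k-2}^T R_{k-1}=0$). This turns the first two cross terms into $(R_{k-1}^T R_{k-1})\Upsilon_{k-1}$ and its transpose. For the quadratic term I use the definition $\Upsilon_{k-1} = (P_{k-1}^T A P_{k-1})^{-1}(R_{k-1}^T R_{k-1})$, so that
\[
\Upsilon_{k-1}^T P_{k-1}^T A P_{k-1}\Upsilon_{k-1} = \Upsilon_{k-1}^T (R_{k-1}^T R_{k-1}).
\]
The transposed cross term and this quadratic term cancel exactly, leaving $\mathfrak{E}_{k-1}-\mathfrak{E}_k = (R_{k-1}^T R_{k-1})\Upsilon_{k-1} = \Theta_{k-1}$, which is \eqref{eq-Theta}.

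For the symmetry and positive definiteness of $\Theta_{k-1}$, the point I would stress is that the product $(R_{k-1}^T R_{k-1})\Upsilon_{k-1}$ is \emph{not} obviously symmetric as written; the cleanest resolution is to insert the definition of $\Upsilon_{k-1}$ once more, giving
\[
\Theta_{k-1} = (R_{k-1}^T R_{k-1})\bigl(P_{k-1}^T A P_{k-1}\bigr)^{-1}(R_{k-1}^T R_{k-1}).
\]
This form is manifestly symmetric, and since $R_{k-1}$ has full column rank by assumption, $R_{k-1}^T R_{k-1}$ is SPD and in particular invertible; combined with $P_{k-1}^T A P_{k-1}$ being SPD (as $A$ is SPD and $P_{k-1}$ has full column rank, which the BCG algorithm tacitly requires), the displayed sandwich is SPD.

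The only real obstacle is the symmetry claim: written as $M\Upsilon$ with $M=R_{k-1}^T R_{k-1}$, the matrix $\Theta_{k-1}$ looks like a product of two SPD matrices, which in general is neither symmetric nor positive definite. The trick is to notice that the asymmetric factor $\Upsilon_{k-1}$ is itself of the form $S^{-1} M$ with $S$ SPD, which turns $\Theta_{k-1}$ into the symmetric sandwich $M S^{-1} M$. Once this observation is in place, both properties are immediate.
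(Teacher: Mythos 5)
Your proposal is correct and follows essentially the same route as the paper: a direct expansion of $\mathfrak{E}_{k-1}-\mathfrak{E}_{k}$ using $X_{k}=X_{k-1}+P_{k-1}\Upsilon_{k-1}$, the BCG orthogonality relations, and the definition of $\Upsilon_{k-1}$. The only cosmetic differences are that the paper centers the expansion on $X-X_{k}$ (so the cross term dies via $P_{k-1}^{T}R_{k}=0$) whereas you center it on $X-X_{k-1}$ (using $P_{k-1}^{T}R_{k-1}=R_{k-1}^{T}R_{k-1}$ and a cancellation), and that the paper establishes positive definiteness by viewing $\Theta_{k-1}$ as $(P_{k-1}\Upsilon_{k-1})^{T}A(P_{k-1}\Upsilon_{k-1})$ with $P_{k-1}\Upsilon_{k-1}$ of full column rank, while you write it as the equivalent congruence $(R_{k-1}^{T}R_{k-1})(P_{k-1}^{T}AP_{k-1})^{-1}(R_{k-1}^{T}R_{k-1})$.
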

\smallskip{}

\begin{proof}
Under the assumptions of the theorem, the vectors $X_{i}$, $R_{i}$,
and $P_{i}$, $i=0,\dots,k$ are well defined. It is easy to check
the following relation
\[
\mathfrak{E}_{k-1}-\mathfrak{E}_{k}=(X_{k}-X_{k-1})^{T}A(X_{k}-X_{k-1})+2(X-X_{k})^{T}A(X_{k}-X_{k-1}).
\]
Using $A(X-X_{k})=R_{k}$ and $X_{k}-X_{k-1}=P_{k-1}\Upsilon_{k-1}$
we get
\[
\mathfrak{E}_{k-1}-\mathfrak{E}_{k}=\Upsilon_{k-1}^{T}P_{k-1}^{T}AP_{k-1}\Upsilon_{k-1}+2R_{k}^{T}P_{k-1}\Upsilon_{k-1}.
\]
The second term of the right-hand side is zero because $P_{k-1}^{T}R_{k}=0$.
The first term can be written as
\begin{eqnarray*}
\Upsilon_{k-1}^{T}P_{k-1}^{T}AP_{k-1}\Upsilon_{k-1} & = & (R_{k-1}^{T}R_{k-1})(P_{k-1}^{T}AP_{k-1})^{-1}(P_{k-1}^{T}AP_{k-1})\Upsilon_{k-1}\\
 & = & (R_{k-1}^{T}R_{k-1})\Upsilon_{k-1}.
\end{eqnarray*}
Since $P_{k-1}\Upsilon_{k-1}$ has full column rank, $\Theta_{k-1}\equiv(R_{k-1}^{T}R_{k-1})\Upsilon_{k-1}$
is symmetric and positive definite.
\end{proof}
\smallskip{}

Taking only the diagonal entries in \eqref{eq-Theta} we find out
that
\[
\textnormal{diag}(\mathfrak{E}_{k-1})=\textnormal{diag}(\Theta_{k-1})+\textnormal{diag}(\mathfrak{E}_{k}),
\]
i.e., for $i=1,\dots,m$ it holds that
\[
\Vert x^{(i)}-x_{k-1}^{(i)}\Vert_{A}^{2}=\left(\Theta_{k-1}\right){}_{i,i}+\Vert x^{(i)}-x_{k}^{(i)}\Vert_{A}^{2},
\]
where $(\Theta_{k-1}){}_{i,i}$ is the entry at position $(i,i)$
of the matrix $\Theta_{k-1}$. Since the matrix $\Theta_{k-1}$ has
positive diagonal entries (it is symmetric and positive definite),
we obtain easily computable lower bounds
\[
\Vert x^{(i)}-x_{k-1}^{(i)}\Vert_{A}^{2}\ge\left(\Theta_{k-1}\right){}_{i,i},\quad i=1,\dots,m,
\]
where the inequality is strict whenever $\Vert x^{(i)}-x_{k}^{(i)}\Vert_{A}>0$.
In other words, the diagonal entries of $\Theta_{k-1}$ are lower bounds
on the squares of the $A$-norms of the columns of the error block
vectors at iteration $k-1$.

As for CG, we can sum relations~\eqref{eq-Theta} for several consecutive
values of $k$ to obtain better bounds; see \cite{mt2024}. In detail,
for a given integer $\ell\geq k$, we get
\begin{equation}
\mathfrak{E}_{k-1}=\sum_{j=k-1}^{\ell-1}\Theta_{k-1}+\mathfrak{E}_{\ell}.\label{eq:better}
\end{equation}

\section{The block Gauss quadrature rule}

\label{s-sec6}

In this section we show that relation~\eqref{eq-Theta} can be considered
as a block Gauss quadrature rule. For details on block Gauss quadrature rules, see \cite{gme2010}.

Mathematically, the block Lanczos algorithm must terminate at or before
$\lceil n/m\rceil$ iterations. Assume that it terminates after $q$
iterations with $A{\cal V}_{q}={\cal V}_{q}T_{q}$. We proceed as
for CG in \cite[Theorem~1.3]{mt2024}. We have
\[
X=X_{0}+{\cal V}_{q}T_{q}^{-1}\underline{E}_{1}\Phi_{0},\quad X_{k}=X_{0}+{\cal V}_{k}T_{k}^{-1}E_{1}\Phi_{0},
\]
where $E_{1}$ (resp.~$\underline{E}_{1}$) is a block vector $km\times m$
(resp.~$qm\times m$) which is zero except for the first block which
is $I_{m}$; see \eqref{def:E}. It yields
\[
X-X_{k}={\cal V}_{q}\left[T_{q}^{-1}\underline{E}_{1}-\begin{pmatrix}T_{k}^{-1}E_{1}\\
0
\end{pmatrix}\right]\Phi_{0}.
\]
Then $\mathfrak{E}_{k}=(X-X_{k})^{T}A(X-X_{k})$ can be written as
\[
\Phi_{0}^{T}\left[T_{q}^{-1}\underline{E}_{1}-\begin{pmatrix}T_{k}^{-1}E_{1}\\
0
\end{pmatrix}\right]^{T}{\cal V}_{q}^{T}A{\cal V}_{q}\left[T_{q}^{-1}\underline{E}_{1}-\begin{pmatrix}T_{k}^{-1}E_{1}\\
0
\end{pmatrix}\right]\Phi_{0},
\]
and, since ${\cal V}_{q}^{T}A{\cal V}_{q}=T_{q}$, we obtain
\begin{equation}
\mathfrak{E}_{k}=\Phi_{0}^{T}\left([T_{q}^{-1}]_{1,1}-[T_{k}^{-1}]_{1,1}\right)\Phi_{0}.\label{eq:Thetak}
\end{equation}
This can be considered as a block Gauss quadrature rule, see \cite{gme2010}.
The matrix $[T_{k}^{-1}]_{1,1}$ is the approximation of $[T_{q}^{-1}]_{1,1}$,
and $\Phi_{0}^{-T}\mathfrak{E}_{k}\Phi_{0}^{-1}$ is the remainder.
At iteration $k$ we do not know $T_{q}$ but, like in CG, we can
take a difference
\[
\mathfrak{E}_{k}-\mathfrak{E}_{k+d}=\Phi_{0}^{T}\left([T_{k+d}^{-1}]_{1,1}-[T_{k}^{-1}]_{1,1}\right)\Phi_{0},
\]
with a given integer $d>0$ which is named the delay. The block $[T_{k}^{-1}]_{1,1}$
can be computed with the factorization of $T_{k}$.

Let us consider $d=1$. As in \cite[p.~34]{mt2024}, we write $T_{k+1}$
as
\[
T_{k+1}=\begin{pmatrix}T_{k} & E_{k}\Gamma_{k}^{T}\\
\Gamma_{k}E_{k}^{T} & \Omega_{k+1}
\end{pmatrix}.
\]
We would like to compute the $(1,1)$ block of $T_{k+1}^{-1}$. It
is given by the $(1,1)$ block of the inverse of the Schur complement
\[
S=T_{k}-(E_{k}\Gamma_{k}^{T})\Omega_{k+1}^{-1}(\Gamma_{k}E_{k}^{T}).
\]
To obtain $S^{-1}$, we use the Sherman-Morrison-Woodbury formula,
\[
S^{-1}=T_{k}^{-1}+(T_{k}^{-1}E_{k})\Gamma_{k}^{T}[\Omega_{k+1}-\Gamma_{k}(E_{k}^{T}T_{k}^{-1}E_{k})\Gamma_{k}^{T}]^{-1}\Gamma_{k}(E_{k}^{T}T_{k}^{-1}).
\]
The $(1,1)$ block of $S^{-1}$ (which is equal to $[T_{k+1}^{-1}]_{1,1}$)
can be written as
\[
[T_{k+1}^{-1}]_{1,1}=[T_{k}^{-1}]_{1,1}+[Y]_{1}\Gamma_{k}^{T}[\Omega_{k+1}-\Gamma_{k}[Y]_{k}\Gamma_{k}^{T}]^{-1}\Gamma_{k}[Y]_{1}^{T},
\]
where $Y=T_{k}^{-1}E_{k}$, and $[Y]_{1}$ and $[Y]_{k}$ are the
first and the $k$th blocks of $Y$. The blocks of $Y$ can be computed
using the block factorization \eqref{eq:LU} of $T_{k}$. It is straightforward
to see that
\[
[Y]_{k}=\Delta_{k}^{-1},\quad[Y]_{1}=(-1)^{k-1}\Pi_{1}^{T}\cdots\Pi_{k-1}^{T}\Delta_{k}^{-1}.
\]
Note that using $\Pi_{k}^{T}=\Delta_{k}^{-1}\Gamma_{k}^{T}$ and \eqref{eq:Phi}
we have
\[
(-1)^{k-1}[Y]_{1}\Gamma_{k}^{T}=\Pi_{1}^{T}\cdots\Pi_{k-1}^{T}\Pi_{k}^{T}=\Phi_{0}^{-T}\Phi_{k}^{T}
\]
and from \eqref{eq:LDLT} it follows that
\[
\Omega_{k+1}-\Gamma_{k}\Delta_{k}\Gamma_{k}^{T}=\Delta_{k+1}.
\]
Therefore,
\begin{eqnarray*}
[T_{k+1}^{-1}]_{1,1}-[T_{k}^{-1}]_{1,1} & = & \Phi_{0}^{-T}\Phi_{k}^{T}\Delta_{k+1}^{-1}\Phi_{k}\Phi_{0}^{-1}\\
 & = & \Phi_{0}^{-T}\Phi_{k}^{T}(\Phi_{k}\Upsilon_{k}\Phi_{k}^{-1})\Phi_{k}\Phi_{0}^{-1}\\
 & = & \Phi_{0}^{-T}\Phi_{k}^{T}\Phi_{k}\Upsilon_{k}\Phi_{0}^{-1}
\end{eqnarray*}
so that
\[
\Phi_{0}^{T}\left([T_{k+1}^{-1}]_{1,1}-[T_{k}^{-1}]_{1,1}\right)\Phi_{0}=\Phi_{k}^{T}\Phi_{k}\Upsilon_{k}=(R_{k}^{T}R_{k})\Upsilon_{k}=\Theta_{k}.
\]
This is consistent with relation~\eqref{eq-Theta}. It shows that
the lower bounds given by this relation are obtained from a block
Gauss quadrature rule.

\section{The block Gauss-Radau quadrature rule}

\label{s-sec7}

To obtain upper bounds, we have to use a block Gauss-Radau quadrature
rule with a prescribed node. We proceed as in \cite{gme2010}, page
109 and following. For the block Lanczos algorithm we have matrix
orthogonal polynomials $p_{j}$ associated to $T_{k}$ satisfying
a block three-term recurrence,
\begin{equation}
\lambda p_{j-1}(\lambda)=p_{j}(\lambda)\Gamma_{j}+p_{j-1}(\lambda)\Omega_{j}+p_{j-2}(\lambda)\Gamma_{j-1}^{T},\label{eq:recp}
\end{equation}
with initial conditions $p_{0}(\lambda)\equiv I_{m},\ p_{-1}(\lambda)\equiv0$; see \cite{gme2010}.
Writing the recurrences \eqref{eq:recp} for $j=1,\dots,k$ we have
\[
\begin{pmatrix}\Omega_{1} & \Gamma_{1}^{T}\\
\Gamma_{1} & \ddots & \ddots\\
 & \ddots & \ddots & \Gamma_{k-1}^{T}\\
 &  & \Gamma_{k-1} & \Omega_{k}
\end{pmatrix}\begin{pmatrix}p_{0}^{T}(\lambda)\\
\vdots\\
p_{k-2}^{T}(\lambda)\\
p_{k-1}^{T}(\lambda)
\end{pmatrix}=\lambda\begin{pmatrix}p_{0}^{T}(\lambda)\\
\vdots\\
p_{k-2}^{T}(\lambda)\\
p_{k-1}^{T}(\lambda)
\end{pmatrix}-\begin{pmatrix}0\\
\vdots\\
0\\
\Gamma_{k}^{T}p_{k}^{T}(\lambda)
\end{pmatrix}E_{k}^{T}.
\]
The nodes of the block Gauss quadrature rule are the eigenvalues of
$T_{k}$ and the weights are the first $m$ components of the eigenvectors;
see \cite{gme2010}. For the block Gauss-Radau rule, we would like
a real positive $\mu$ to be a multiple eigenvalue (with multiplicity
$m$) of an extended matrix $T_{k+1}^{(\mu)}$,
\[
T_{k+1}^{(\mu)}=\begin{pmatrix}\Omega_{1} & \Gamma_{1}^{T}\\
\Gamma_{1} & \Omega_{2} & \Gamma_{2}^{T}\\
 & \ddots & \ddots & \ddots\\
 &  & \Gamma_{k-1} & \Omega_{k} & \Gamma_{k}^{T}\\
 &  &  & \Gamma_{k} & \Omega_{k+1}^{(\mu)}
\end{pmatrix}.
\]
Such a $\Omega_{k+1}^{(\mu)}$ is determined in the following preposition.

\smallskip{}

\begin{proposition}
\label{p-Omega} Suppose that $\mu$ is not an eigenvalue of $T_{k}$.
Then $\mu$ is a multiple eigenvalue (with multiplicity $m$) of the
extended matrix $T_{k+1}^{(\mu)}$ if and only if
\begin{equation}
\Omega_{k+1}^{(\mu)}=\mu I_{m}+\Gamma_{k}[(T_{k}-\mu I)^{-1}]_{k,k}\Gamma_{k}^{T}.\label{eq:Omegamu}
\end{equation}
\end{proposition}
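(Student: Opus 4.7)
The plan is to reduce the eigenvalue condition to the vanishing of an $m\times m$ Schur complement. Partition
\[
T_{k+1}^{(\mu)} - \mu I_{(k+1)m} = \begin{pmatrix} T_k - \mu I_{km} & E_k \Gamma_k^T \\ \Gamma_k E_k^T & \Omega_{k+1}^{(\mu)} - \mu I_m \end{pmatrix},
\]
which is the natural splitting induced by the block structure of $T_{k+1}^{(\mu)}$. Since by assumption $\mu \notin \mathrm{spec}(T_k)$, the $(1,1)$ block $T_k - \mu I$ is nonsingular and we can perform a block $LDU$ factorization whose middle factor is $\mathrm{diag}(T_k - \mu I,\, S)$, where
\[
S \;=\; \Omega_{k+1}^{(\mu)} - \mu I_m - \Gamma_k \, E_k^T (T_k - \mu I)^{-1} E_k \, \Gamma_k^T.
\]
By the definition \eqref{def:E} of $E_k$, $E_k^T (T_k - \mu I)^{-1} E_k$ is precisely the $(k,k)$ block of $(T_k - \mu I)^{-1}$, so
\[
S \;=\; \Omega_{k+1}^{(\mu)} - \mu I_m - \Gamma_k \, [(T_k - \mu I)^{-1}]_{k,k} \, \Gamma_k^T.
\]

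Next I would use the fact that the outer triangular factors in the $LDU$ decomposition are invertible, so the null space of $T_{k+1}^{(\mu)} - \mu I$ has the same dimension as the null space of $\mathrm{diag}(T_k - \mu I, S)$. Since $T_k - \mu I$ is invertible, this dimension equals $\dim \ker S$. The matrix $T_{k+1}^{(\mu)}$ is symmetric, so its algebraic and geometric multiplicities at $\mu$ coincide, which means $\mu$ is an eigenvalue of multiplicity exactly $m$ if and only if $\dim \ker S = m$. Because $S$ is $m\times m$, this is equivalent to $S = 0$, i.e.\ to the formula \eqref{eq:Omegamu}.

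The main obstacle is really just bookkeeping: verifying that the Schur complement identification $E_k^T (T_k - \mu I)^{-1} E_k = [(T_k - \mu I)^{-1}]_{k,k}$ is stated cleanly (it follows directly from $E_k = e_k \otimes I_m$), and that one correctly converts ``multiplicity $m$'' into ``nullity of $S$ equals $m$,'' which uses symmetry of $T_{k+1}^{(\mu)}$ to identify algebraic and geometric multiplicities. No further analytic estimates are needed; the argument is purely algebraic.
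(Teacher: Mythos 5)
Your argument is correct, but it follows a genuinely different route from the paper. The paper works with the matrix orthogonal polynomials $p_j$ attached to $T_k$: it asserts that $\mu$ has multiplicity $m$ in $T_{k+1}^{(\mu)}$ exactly when the last block row of the three-term recurrence closes, i.e.\ $\mu p_k(\mu)-p_k(\mu)\Omega_{k+1}^{(\mu)}-p_{k-1}(\mu)\Gamma_k^T=0$, solves for $\Omega_{k+1}^{(\mu)}$ using the nonsingularity of $p_k(\mu)$, and then identifies $p_{k-1}(\mu)^Tp_k(\mu)^{-T}$ as $-E_k^T(T_k-\mu I)^{-1}E_k\Gamma_k^T$ by solving the shifted block tridiagonal system built from the recurrence. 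You instead bypass the polynomial machinery entirely and read the multiplicity off the block $LDU$ factorization of $T_{k+1}^{(\mu)}-\mu I$: since $T_k-\mu I$ is invertible, the nullity of the whole matrix equals the nullity of the $m\times m$ Schur complement $S=\Omega_{k+1}^{(\mu)}-\mu I_m-\Gamma_k[(T_k-\mu I)^{-1}]_{k,k}\Gamma_k^T$, and symmetry of $T_{k+1}^{(\mu)}$ converts ``multiplicity $m$'' into ``$\dim\ker S=m$,'' hence $S=0$, which is \eqref{eq:Omegamu}. Your version is more elementary and self-contained, and it makes both directions of the equivalence (including the precise multiplicity count) completely transparent, whereas the paper's starting characterization in terms of $p_k(\mu)$ is essentially quoted from the block orthogonal polynomial literature. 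What the paper's route buys is coherence with the rest of Section~7: the Gauss--Radau rule is defined by prescribing a node of the block orthogonal polynomial sequence, so deriving $\Omega_{k+1}^{(\mu)}$ through the recurrence \eqref{eq:recp} connects directly to that framework. One small point worth making explicit in your write-up: the identification of algebraic and geometric multiplicity requires $\Omega_{k+1}^{(\mu)}$ (hence $T_{k+1}^{(\mu)}$) to be symmetric; this is implicit in the construction of the extended matrix, and in the ``if'' direction it follows automatically from \eqref{eq:Omegamu}.
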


\begin{proof}
The necessary and sufficient condition for $\mu$ to be an eigenvalue
with multiplicity $m$ of the extended matrix $T_{k+1}^{(\mu)}$ is
\[
\mu p_{k}(\mu)-p_{k}(\mu)\Omega_{k+1}^{(\mu)}-p_{k-1}(\mu)\Gamma_{k}^{T}=0.
\]
Since $\mu$ is not an eigenvalue of $T_{k}$, the matrix $p_{k}(\mu)$
is nonsingular, and
\[
\Omega_{k+1}^{(\mu)}=\mu I_{m}-p_{k}^{-1}(\mu)p_{k-1}(\mu)\Gamma_{k}^{T},
\]
so we only need to evaluate $p_{k-1}^{T}(\mu)p_{k}^{-T}(\mu)$ using
\eqref{eq:recp}. Substituting $\mu$ into \eqref{eq:recp} and multiplying
each recurrence by $p_{k}(\mu)^{-1}$ from the left, we obtain a system
of linear equations
\[
(T_{k}-\mu I)\begin{pmatrix}p_{0}(\mu)^{T}p_{k}(\mu)^{-T}\\
\vdots\\
p_{k-1}(\mu)^{T}p_{k}(\mu)^{-T}
\end{pmatrix}=-\begin{pmatrix}0\\
\vdots\\
0\\
\Gamma_{k}^{T}
\end{pmatrix}=-E_{k}\Gamma_{k}^{T},
\]
so that
\[
p_{k-1}(\mu)^{T}p_{k}(\mu)^{-T}=-E_{k}^{T}(T_{k}-\mu I)^{-1}E_{k}\Gamma_{k}^{T}.
\]
Finally,
\begin{eqnarray*}
\Omega_{k+1}^{(\mu)} & = & \mu I_{m}-p_{k}^{-1}(\mu)p_{k-1}(\mu)\Gamma_{k}^{T}\\
 & = & \mu I_{m}+\Gamma_{k}E_{k}^{T}(T_{k}-\mu I)^{-1}E_{k}\Gamma_{k}^{T}.
\end{eqnarray*}
For the derivation of a block Gauss-Radau quadrature rule, one can
also see K.~Lund's Ph.D.~thesis \cite[p.~104]{lund2018}.
\end{proof}
\smallskip{}

We would like to express everything in terms of the quantities computed
in BCG. To do that we have to consider the block factorization of
the shifted matrix $T_{k}-\mu I$ as in \eqref{eq:LU}. We will proceed
analogously as in \cite[Chapter~4]{mt2024}.
\begin{lemma}
\label{l-Delta} Let $\overline{\Delta}_{k+1}^{(\mu)}$ (resp.~$\Delta_{k+1}^{(\mu)}$),
$k=0,\dots,j$, be the diagonal blocks of the factorizations of $T_{j+1}-\mu I$
(resp.~$T_{j+1}^{(\mu)}$). Then,
\[
\Delta_{k+1}^{(\mu)}=\Delta_{k+1}-\overline{\Delta}_{k+1}^{(\mu)}.
\]
\end{lemma}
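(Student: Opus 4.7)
The plan is to reduce the identity to the recurrences that define the three block factorizations involved: that of $T_{k+1}$, that of $T_{k+1}-\mu I$, and that of the Gauss--Radau extension $T_{k+1}^{(\mu)}$. I read the statement as concerning the \emph{new} last diagonal block produced at step $k$ (since the first $k$ blocks of $T_{k+1}^{(\mu)}$ coincide with those of $T_{k+1}$, only the bottom-right block of the factorization actually differs, and it is precisely this block that makes the relation substantive).

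The key identity I need is
\[
\bigl[(T_k - \mu I)^{-1}\bigr]_{k,k} = \bigl(\overline{\Delta}_k^{(\mu)}\bigr)^{-1}.
\]
This follows from a block factorization of $T_k - \mu I$ exactly analogous to \eqref{eq:LU}. Expanding the $(k,k)$ block of the inverse as a sum of products of blocks of the triangular factors and the block-diagonal middle factor, only the $i=k$ term survives, because $k$ is the final block index and the triangular factors have identity diagonal blocks after the standard rescaling.

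With this in hand, I plug the identity into Proposition~\ref{p-Omega} to rewrite
\[
\Omega_{k+1}^{(\mu)} = \mu I_m + \Gamma_k \bigl(\overline{\Delta}_k^{(\mu)}\bigr)^{-1} \Gamma_k^T,
\]
and then apply the recurrence \eqref{eq:LDLT} to each of the three matrices. For $T_{k+1}^{(\mu)}$ (whose first $k$ diagonal blocks are $\Delta_1,\dots,\Delta_k$), this produces
\[
\Delta_{k+1}^{(\mu)} = \Omega_{k+1}^{(\mu)} - \Gamma_k \Delta_k^{-1} \Gamma_k^T = \mu I_m + \Gamma_k \bigl(\overline{\Delta}_k^{(\mu)}\bigr)^{-1} \Gamma_k^T - \Gamma_k \Delta_k^{-1} \Gamma_k^T.
\]
For $T_{k+1} - \mu I$ the same recurrence yields $\overline{\Delta}_{k+1}^{(\mu)} = (\Omega_{k+1} - \mu I_m) - \Gamma_k \bigl(\overline{\Delta}_k^{(\mu)}\bigr)^{-1} \Gamma_k^T$, while $\Delta_{k+1} = \Omega_{k+1} - \Gamma_k \Delta_k^{-1} \Gamma_k^T$ by definition. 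Subtracting the former from the latter reproduces the expression for $\Delta_{k+1}^{(\mu)}$ displayed above.

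The main obstacle is not computational but organizational: one must correctly identify which diagonal blocks of which factorizations the symbols refer to, and exploit that the off-diagonal blocks $\Gamma_k$ are common to the three matrices (since only the $\Omega$'s are modified by either the shift or the Gauss--Radau correction). Once this bookkeeping is in place, the proof is essentially the scalar CG argument of \cite[Chapter~4]{mt2024} carried out block by block.
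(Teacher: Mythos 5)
Your proof is correct and follows essentially the same route as the paper: both rest on the identity $[(T_k-\mu I)^{-1}]_{k,k}=[\overline{\Delta}_k^{(\mu)}]^{-1}$, substitute it into Proposition~\ref{p-Omega} to get $\Omega_{k+1}^{(\mu)}=\mu I_m+\Gamma_k[\overline{\Delta}_k^{(\mu)}]^{-1}\Gamma_k^T$, and then compare the three recurrences of type \eqref{eq:LDLT}; the only difference is that you subtract the two expressions directly while the paper rearranges $\overline{\Delta}_{k+1}^{(\mu)}$, which is the same computation. Your explicit remark that the first $k$ diagonal blocks of the factorization of $T_{k+1}^{(\mu)}$ coincide with $\Delta_1,\dots,\Delta_k$ (so that the recurrence for $\Delta_{k+1}^{(\mu)}$ uses $\Delta_k^{-1}$, not $[\Delta_k^{(\mu)}]^{-1}$) is a point the paper uses implicitly, and you read the statement's indexing correctly.
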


\begin{proof}
From \eqref{eq:LDLT}, the diagonal blocks of $T_{j+1}-\mu I$ are
defined by the block recurrence
\[
\overline{\Delta}_{k+1}^{(\mu)}=\Omega_{k+1}-\mu I_{m}-\Gamma_{k}[\overline{\Delta}_{k}^{(\mu)}]^{-1}\Gamma_{k}^{T}
\]
with $\overline{\Delta}_{1}^{(\mu)}=\Omega_{1}-\mu I_{m}$. From the
factorization of $T_{j+1}-\mu I$ it easily follows that
\[
E_{k}^{T}(T_{k}-\mu I)^{-1}E_{k}=[\overline{\Delta}_{k}^{(\mu)}]^{-1},
\]
so that using \eqref{eq:Omegamu},
\[
\Omega_{k+1}^{(\mu)}=\mu I_{m}+\Gamma_{k}[\overline{\Delta}_{k}^{(\mu)}]^{-1}\Gamma_{k}^{T}.
\]
We also consider the factorizations of $T_{k+1}$ and $T_{k+1}^{(\mu)}$
for which we have
\[
\Delta_{k+1}=\Omega_{k+1}-\Gamma_{k}\Delta_{k}^{-1}\Gamma_{k}^{T},\quad\Delta_{k+1}^{(\mu)}=\Omega_{k+1}^{(\mu)}-\Gamma_{k}\Delta_{k}^{-1}\Gamma_{k}^{T}.
\]

These relations lead to
\begin{eqnarray*}
\overline{\Delta}_{k+1}^{(\mu)} & = & \left(\Omega_{k+1}\right)-\mu I_{m}-\Gamma_{k}[\overline{\Delta}_{k}^{(\mu)}]^{-1}\Gamma_{k}^{T}\\
 & = & \Delta_{k+1}+\left(\Gamma_{k}\Delta_{k}^{-1}\Gamma_{k}^{T}\right)-\mu I_{m}-\Gamma_{k}[\overline{\Delta}_{k}^{(\mu)}]^{-1}\Gamma_{k}^{T}\\
 & = & \Delta_{k+1}+\left(\Omega_{k+1}^{(\mu)}\right)-\Delta_{k+1}^{(\mu)}-\mu I_{m}-\Gamma_{k}[\overline{\Delta}_{k}^{(\mu)}]^{-1}\Gamma_{k}^{T}\\
 & = & \Delta_{k+1}-\Delta_{k+1}^{(\mu)}.
\end{eqnarray*}
Therefore, the last diagonal block of the factorization of $T_{k+1}^{(\mu)}$
is a modification of the last diagonal block of the factorization
of $T_{k+1}$,
\[
\Delta_{k+1}^{(\mu)}=\Delta_{k+1}-\overline{\Delta}_{k+1}^{(\mu)},
\]
which proves the claim.
\end{proof}
\smallskip{}

Before discussing the computation of the block Gauss-Radau bounds,
we prove the following algebraic lemma, which will be used later.\smallskip{}

\begin{lemma}
\label{l-inverses}Let $G$ and $H$ be two real nonsingular matrices
such that $H-G$ is nonsingular. Then it holds that
\[
\left(G^{-1}-H^{-1}\right)^{-1}=G(H-G)^{-1}G+G.
\]
Moreover, if $G$ and $H-G$ are symmetric and positive definite,
then also
\[
G^{-1}-H^{-1}
\]
is symmetric and positive definite.
\end{lemma}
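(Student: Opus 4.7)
The plan is to handle the two parts of the lemma separately, each by a short algebraic manipulation.

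For the identity, I would start from the factorization
\[
G^{-1} - H^{-1} = G^{-1}(H - G)H^{-1},
\]
which one verifies immediately by expanding. Since $G$, $H$, and $H - G$ are all assumed nonsingular, this product is a product of three nonsingular matrices, hence invertible, and its inverse is
\[
(G^{-1} - H^{-1})^{-1} = H(H - G)^{-1}G.
\]
Now I would write $H = (H-G) + G$ on the left factor to obtain
\[
H(H-G)^{-1}G = \bigl[(H-G) + G\bigr](H-G)^{-1}G = G + G(H-G)^{-1}G,
\]
which is the claimed formula. (Alternatively, one could split $G = H - (H-G)$ on the right factor; both routes are equivalent.)

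For the second statement, symmetry of $G^{-1} - H^{-1}$ is immediate from symmetry of $G$ and $H$. It remains to establish positive definiteness. Since $G$ and $H - G$ are SPD, so is their sum $H$, hence $H^{-1}$ and $(H-G)^{-1}$ are SPD as well. Using the identity just proved, the matrix
\[
(G^{-1} - H^{-1})^{-1} = G + G(H-G)^{-1}G
\]
is a sum of two SPD matrices: $G$ itself, and the congruence $G(H-G)^{-1}G$ of the SPD matrix $(H-G)^{-1}$ by the nonsingular symmetric matrix $G$. Therefore $(G^{-1} - H^{-1})^{-1}$ is SPD, and since the inverse of an SPD matrix is SPD, $G^{-1} - H^{-1}$ is SPD as well.

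There is no real obstacle here; the only mild subtlety is choosing the right intermediate factorization so that the algebra collapses cleanly. The factorization $G^{-1} - H^{-1} = G^{-1}(H-G)H^{-1}$ is the natural one because it produces a product whose inverse, after the substitution $H = (H-G) + G$, immediately separates into the desired two terms and simultaneously exhibits the SPD structure needed for the second assertion.
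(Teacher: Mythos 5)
Your proposal is correct and follows essentially the same route as the paper: the identity is obtained from the factorization $G^{-1}-H^{-1}=G^{-1}(H-G)H^{-1}$ (the paper uses the mirror-image factorization $H^{-1}(H-G)G^{-1}$ and runs the computation in the reverse direction), and positive definiteness is deduced by recognizing $(G^{-1}-H^{-1})^{-1}=G+G(H-G)^{-1}G$ as a congruence of an SPD matrix by $G$, which is exactly the paper's appeal to Sylvester's law of inertia phrased slightly more elementarily. No gaps.
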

\smallskip{}

\begin{proof}
Using a simple algebraic manipulation we obtain
\begin{eqnarray*}
G(H-G)^{-1}G+G & = & G(H-G)^{-1}G+G(H-G)^{-1}(H-G)\\
 & = & G(H-G)^{-1}H\\
 & = & (H^{-1}\left(H-G\right)G^{-1})^{-1}\\
 & = & (G^{-1}-H^{-1})^{-1}.
\end{eqnarray*}

If $G$ and $H-G$ are symmetric and positive definite, then
$H$ is positive definite. Using the above identity
\[
G((H-G)^{-1}+G^{-1})G=\left(G^{-1}-H^{-1}\right)^{-1}
\]
and the fact that the matrix
\[
(H-G)^{-1}+G^{-1}
\]
is symmetric and positive definite, we find out by Sylvester's law
of inertia that $G^{-1}-H^{-1}$ has positive eigenvalues.
\end{proof}
\smallskip{}

To compute the block Gauss-Radau bounds we need the following result.

\smallskip{}

\begin{theorem}
\label{t-Gamma} Denoting $\Upsilon_{j}^{(\mu)}\equiv\Phi_{j}^{-1}[\Delta_{j+1}^{(\mu)}]^{-1}\Phi_{j}$,
$j=0,\dots,k$, it holds that $\Upsilon_{0}^{(\mu)}=\mu^{-1}I_{m}$
and
\begin{eqnarray}
\Upsilon_{k}^{(\mu)} & = & [\mu\,(\Upsilon_{k-1}^{(\mu)}-\Upsilon_{k-1})+\Xi_{k}]^{-1}\,(\Upsilon_{k-1}^{(\mu)}-\Upsilon_{k-1})\label{eq-Gammamu}\\
 & = & [\mu\,(\Theta_{k-1}^{(\mu)}-\Theta_{k-1})+R_{k}^{T}R_{k}]^{-1}(\Theta_{k-1}^{(\mu)}-\Theta_{k-1})\label{eq:Gammamu1}
\end{eqnarray}
for $k>0$, where
\begin{equation}
\Theta_{k-1}^{(\mu)}\equiv(R_{k-1}^{T}R_{k-1})\,\Upsilon_{k-1}^{(\mu)}.\label{eq:defC}
\end{equation}
\end{theorem}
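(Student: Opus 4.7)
The plan is to prove the statement by induction on $k$. The base case $k=0$ is quick: the matrix $T_1^{(\mu)}$ consists of the single block $\Omega_1^{(\mu)}$, and the requirement that $\mu$ be an eigenvalue of multiplicity $m$ forces $\Omega_1^{(\mu)}=\mu I_m$; hence $\Delta_1^{(\mu)}=\mu I_m$ and $\Upsilon_0^{(\mu)}=\mu^{-1}I_m$.

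For $k>0$, the key structural observation is that $T_{k+1}^{(\mu)}$ and $T_{k+1}$ differ only in their last diagonal block, so the factorizations analogous to~\eqref{eq:LU} share the first $k$ diagonal blocks $\Delta_1,\ldots,\Delta_k$, whence $\Delta_{k+1}^{(\mu)}=\Omega_{k+1}^{(\mu)}-\Gamma_k\Delta_k^{-1}\Gamma_k^T$. Combining with Proposition~\ref{p-Omega} gives
\[
\Delta_{k+1}^{(\mu)}-\mu I_m \;=\; \Gamma_k\bigl([\overline{\Delta}_k^{(\mu)}]^{-1}-\Delta_k^{-1}\bigr)\Gamma_k^T.
\]
The next task is to translate the central factor into BCG quantities. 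The definition of $\Upsilon_{k-1}^{(\mu)}$ together with~\eqref{eq:Delta} gives $[\Delta_k^{(\mu)}]^{-1}-\Delta_k^{-1}=\Phi_{k-1}(\Upsilon_{k-1}^{(\mu)}-\Upsilon_{k-1})\Phi_{k-1}^{-1}$. Lemma~\ref{l-Delta} yields $\overline{\Delta}_k^{(\mu)}=\Delta_k-\Delta_k^{(\mu)}$, so Lemma~\ref{l-inverses} with $G=\Delta_k^{(\mu)}$, $H=\Delta_k$ (hence $H-G=\overline{\Delta}_k^{(\mu)}$) produces $\bigl([\Delta_k^{(\mu)}]^{-1}-\Delta_k^{-1}\bigr)^{-1}=\Delta_k^{(\mu)}[\overline{\Delta}_k^{(\mu)}]^{-1}\Delta_k$. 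Solving for $[\overline{\Delta}_k^{(\mu)}]^{-1}$, subtracting $\Delta_k^{-1}$, and using the elementary identity $XY^{-1}Z-Z=(X-Y)Y^{-1}Z$, one arrives at
\[
[\overline{\Delta}_k^{(\mu)}]^{-1}-\Delta_k^{-1} \;=\; \Phi_{k-1}\Upsilon_{k-1}(\Upsilon_{k-1}^{(\mu)}-\Upsilon_{k-1})^{-1}\Upsilon_{k-1}\Phi_{k-1}^{-1}.
\]

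Substituting back and using $\Gamma_k=\Phi_k\Upsilon_{k-1}^{-1}\Phi_{k-1}^{-1}$ from~\eqref{eq-Gamma}, the outer factors telescope: on the left $\Gamma_k\Phi_{k-1}\Upsilon_{k-1}=\Phi_k$, while on the right the symmetry of $\Theta_{k-1}=(R_{k-1}^TR_{k-1})\Upsilon_{k-1}$ from Theorem~\ref{t-Thetak} allows one to reduce $\Upsilon_{k-1}\Phi_{k-1}^{-1}\Gamma_k^T$ to $(R_{k-1}^TR_{k-1})^{-1}\Phi_k^T$. Conjugating by $\Phi_k^{-1}$ and $\Phi_k$ and using $R_k^TR_k=\Phi_k^T\Phi_k$ and the definition of $\Xi_k$ gives $[\Upsilon_k^{(\mu)}]^{-1}=\mu I_m+(\Upsilon_{k-1}^{(\mu)}-\Upsilon_{k-1})^{-1}\Xi_k$, and inverting yields~\eqref{eq-Gammamu}; the second form~\eqref{eq:Gammamu1} then follows by multiplying numerator and bracketed denominator by $R_{k-1}^TR_{k-1}$ to convert each $\Upsilon$ into the corresponding $\Theta$. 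I expect the trickiest step to be the right-hand simplification producing $(R_{k-1}^TR_{k-1})^{-1}\Phi_k^T$: because the $m\times m$ blocks do not commute, one must invoke the symmetry of $\Theta_{k-1}$ explicitly in the form $\Upsilon_{k-1}^{-T}=(R_{k-1}^TR_{k-1})\Upsilon_{k-1}^{-1}(R_{k-1}^TR_{k-1})^{-1}$ to cancel $\Upsilon_{k-1}^{-T}$ against $\Upsilon_{k-1}$; everything else is routine assembly.
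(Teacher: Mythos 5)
Your proof is correct and follows essentially the same route as the paper's: the identity $\Delta_{k+1}^{(\mu)}=\Omega_{k+1}^{(\mu)}-\Gamma_{k}\Delta_{k}^{-1}\Gamma_{k}^{T}$ combined with Proposition~\ref{p-Omega}, Lemma~\ref{l-Delta}, and Lemma~\ref{l-inverses}, followed by conjugation with the $\Phi_{j}$'s to reach $[\Upsilon_{k}^{(\mu)}]^{-1}=\mu I_{m}+[\Upsilon_{k-1}^{(\mu)}-\Upsilon_{k-1}]^{-1}\Xi_{k}$. The only (harmless) deviation is in the middle: you keep $\Gamma_{k}$ and must invoke the symmetry of $\Theta_{k-1}$ to collapse the right-hand factor to $(R_{k-1}^{T}R_{k-1})^{-1}\Phi_{k}^{T}$, whereas the paper substitutes $\Gamma_{k}=\Pi_{k}\Delta_{k}$ with $\Pi_{k}=\Phi_{k}\Phi_{k-1}^{-1}$ and sidesteps that symmetry argument entirely.
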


\begin{proof}
The initial condition comes from $\Delta_{1}^{(\mu)}=\mu I_{m}$.
As in the proof of some previous results, we can get the relation
\begin{eqnarray*}
\Delta_{k+1}^{(\mu)} & = & \Omega_{k+1}^{(\mu)}-\Gamma_{k}\Delta_{k}^{-1}\Gamma_{k}^{T}\\
 & = & \mu I_{m}+\Gamma_{k}[\overline{\Delta}_{k}^{(\mu)}]^{-1}\Gamma_{k}^{T}-\Gamma_{k}\Delta_{k}^{-1}\Gamma_{k}^{T}\\
 & = & \mu I_{m}+\Gamma_{k}\left([\Delta_{k}-\Delta_{k}^{(\mu)}]^{-1}-\Delta_{k}^{-1}\right)\Gamma_{k}^{T}.
\end{eqnarray*}
Since $\Pi_{k}=\Gamma_{k}\Delta_{k}^{-1}$, we can replace $\Gamma_{k}$
and obtain
\begin{eqnarray*}
\Delta_{k+1}^{(\mu)}-\mu I_{m} & = & \Pi_{k}\left(\Delta_{k}[\Delta_{k}-\Delta_{k}^{(\mu)}]^{-1}\Delta_{k}-\Delta_{k}\right)\Pi_{k}^{T}\\
 & = & \Pi_{k}[(\Delta_{k}^{(\mu)})^{-1}-\Delta_{k}^{-1}]^{-1}\Pi_{k}^{T},
\end{eqnarray*}
where we used the first part of Lemma~\ref{l-inverses}.

Using \eqref{eq:Phi}, \eqref{eq-cUpPhi}, and $\Upsilon_{k-1}^{(\mu)}=\Phi_{k-1}^{-1}[\Delta_{k}^{(\mu)}]^{-1}\Phi_{k-1}$,
we obtain
\begin{eqnarray*}
\Delta_{k+1}^{(\mu)}-\mu I_{m} & = & \Phi_{k}\Phi_{k-1}^{-1}[\Phi_{k-1}\Upsilon_{k-1}^{(\mu)}\Phi_{k-1}^{-1}-\Phi_{k-1}\Upsilon_{k-1}\Phi_{k-1}^{-1}]^{-1}\Phi_{k-1}^{-T}\Phi_{k}^{T}\\
 & = & \Phi_{k}[\Upsilon_{k-1}^{(\mu)}-\Upsilon_{k-1}]^{-1}\Phi_{k-1}^{-1}\Phi_{k-1}^{-T}\Phi_{k}^{T}
\end{eqnarray*}
and hence
\[
\Phi_{k}^{-1}\Delta_{k+1}^{(\mu)}\Phi_{k}-\mu I_{m}=[\Upsilon_{k-1}^{(\mu)}-\Upsilon_{k-1}]^{-1}\Phi_{k-1}^{-1}\Phi_{k-1}^{-T}\Phi_{k}^{T}\Phi_{k}.
\]
Since $\Phi_{k}^{-1}\Delta_{k+1}^{(\mu)}\Phi_{k}=[\Upsilon_{k}^{(\mu)}]^{-1}$
and $\Phi_{k-1}^{-1}\Phi_{k-1}^{-T}\Phi_{k}^{T}\Phi_{k}=\Xi_{k}$,
we get
\begin{eqnarray*}
[\Upsilon_{k}^{(\mu)}]^{-1} & = & \mu I_{m}+[\Upsilon_{k-1}^{(\mu)}-\Upsilon_{k-1}]^{-1}\Xi_{k}\\
 & = & [\Upsilon_{k-1}^{(\mu)}-\Upsilon_{k-1}]^{-1}\left(\mu[\Upsilon_{k-1}^{(\mu)}-\Upsilon_{k-1}]+\Xi_{k}\right).
\end{eqnarray*}
The relation \eqref{eq-Gammamu} follows by taking the inverse of
both sides. Finally, using $\Xi_{k}=(R_{k-1}^{T}R_{k-1})^{-1}(R_{k}^{T}R_{k})$
and the definition of $\Theta_{k-1}^{(\mu)}$, we obtain \eqref{eq:Gammamu1}.
\end{proof}
\smallskip{}

What we have done above is the block equivalent of \texttt{stqds}
and \texttt{dstqds} algorithms for CG; see \cite[Chapter~4]{mt2024}. For the
Gauss-Radau rule, we have to consider
\[
\Phi_{0}^{T}([(T_{k+1}^{(\mu)})^{-1}]_{1,1}-[T_{k}^{-1}]_{1,1})\Phi_{0}=(R_{k}^{T}R_{k})\Upsilon_{k}^{(\mu)}=\Theta_{k}^{(\mu)},
\]
where the above equality is obtained similarly as for the Gauss rule,
except that we have to replace $\Omega_{k+1}$, $\Delta_{k+1}$, and
$\Upsilon_{k}$ respectively by $\Omega_{k+1}^{(\mu)}$, $\Delta_{k+1}^{(\mu)}$,
and $\Upsilon_{k}^{(\mu)}$. Now, we would like to prove that we can
obtain upper bounds from this result.

In the following we will assume that $0<\mu<\lambda_{1},$ where $\lambda_{1}$
is the smallest eigenvalue of $A$. We first prove a result about positive definiteness
of some blocks that will be used later.
\begin{lemma}
Let $\mu$ be such that \textup{$0<\mu<\lambda_{1}$}. Assume that the matrices $R_{k}$,
$k=0,\dots,\ell-1,$ have full column rank. Then it holds that the
blocks $\Delta_{k+1}^{(\mu)}$ as well as $\Theta_{k}^{(\mu)}-\Theta_{k}$
defined in \eqref{eq:defC} and \eqref{eq:defTheta} are symmetric
and positive definite matrices.
\end{lemma}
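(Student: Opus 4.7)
The plan is to first establish the positive definiteness of $\Delta_{k+1}^{(\mu)}$ by induction on $k$, and then deduce the statement for $\Theta_k^{(\mu)}-\Theta_k$ by rewriting it as a congruence transformation and invoking Lemma~\ref{l-inverses}.

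For the induction, the base case $\Delta_1^{(\mu)}=\mu I_m$ is immediately SPD since $\mu>0$. For the inductive step I would reuse the identity
\[
\Delta_{k+1}^{(\mu)}-\mu I_m = \Pi_k\bigl[(\Delta_k^{(\mu)})^{-1}-\Delta_k^{-1}\bigr]^{-1}\Pi_k^{T}
\]
derived inside the proof of Theorem~\ref{t-Gamma}. Assuming inductively that $\Delta_k^{(\mu)}$ is SPD, and using $\Delta_k-\Delta_k^{(\mu)}=\overline{\Delta}_k^{(\mu)}$ from Lemma~\ref{l-Delta}, I would verify that $\overline{\Delta}_k^{(\mu)}$ is SPD by observing that $T_k=\mathcal{V}_k^{T}A\mathcal{V}_k$ has all eigenvalues at least $\lambda_1>\mu$, so $T_k-\mu I$ is SPD and its block factorization has SPD diagonal blocks. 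The second part of Lemma~\ref{l-inverses}, applied with $G=\Delta_k^{(\mu)}$ and $H=\Delta_k$, then guarantees that $(\Delta_k^{(\mu)})^{-1}-\Delta_k^{-1}$ is SPD, so its inverse is SPD and the $\Pi_k$-conjugation on the right-hand side is at least positive semidefinite. Hence $\Delta_{k+1}^{(\mu)}\succeq\mu I_m\succ 0$, closing the induction.

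For the statement about $\Theta_k^{(\mu)}-\Theta_k$, substituting $R_k^{T}R_k=\Phi_k^{T}\Phi_k$, $\Upsilon_k=\Phi_k^{-1}\Delta_{k+1}^{-1}\Phi_k$ and $\Upsilon_k^{(\mu)}=\Phi_k^{-1}[\Delta_{k+1}^{(\mu)}]^{-1}\Phi_k$ yields the explicit congruence
\[
\Theta_k^{(\mu)}-\Theta_k = \Phi_k^{T}\bigl([\Delta_{k+1}^{(\mu)}]^{-1}-\Delta_{k+1}^{-1}\bigr)\Phi_k,
\]
where $\Phi_k$ is nonsingular under the full column rank hypothesis on $R_k$. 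Applying Lemma~\ref{l-inverses} once more with $G=\Delta_{k+1}^{(\mu)}$ and $H=\Delta_{k+1}$, both SPD by the previous step, and with $H-G=\overline{\Delta}_{k+1}^{(\mu)}$ again SPD, the middle factor is SPD and the congruence transfers this to $\Theta_k^{(\mu)}-\Theta_k$.

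The main obstacle is conceptual rather than computational: the difference $\Theta_k^{(\mu)}-\Theta_k=(R_k^{T}R_k)(\Upsilon_k^{(\mu)}-\Upsilon_k)$ is a product of two symmetric factors and is not manifestly symmetric, but the Cholesky-type decomposition $R_k^{T}R_k=\Phi_k^{T}\Phi_k$ absorbs the similarity transforms hidden inside $\Upsilon_k$ and $\Upsilon_k^{(\mu)}$ and collapses the expression to a clean congruence to which Lemma~\ref{l-inverses} applies. The remainder is bookkeeping to verify that the SPD hypotheses of Lemma~\ref{l-inverses} are in force at every inductive step, which reduces to having $\mu<\lambda_1$.
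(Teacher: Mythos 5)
Your proof is correct and follows essentially the same route as the paper: induction on $\Delta_{k+1}^{(\mu)}$ using Lemma~\ref{l-inverses} together with $\overline{\Delta}_k^{(\mu)}\succ 0$ (which you, unlike the paper, justify explicitly via $T_k-\mu I\succ 0$), followed by the congruence $\Theta_k^{(\mu)}-\Theta_k=\Phi_k^{T}\bigl([\Delta_{k+1}^{(\mu)}]^{-1}-\Delta_{k+1}^{-1}\bigr)\Phi_k$ and Sylvester's law of inertia. The only cosmetic difference is that in the inductive step you use the $\Pi_k$-conjugated form of the recurrence from the proof of Theorem~\ref{t-Gamma} (taking $G=\Delta_k^{(\mu)}$, $H=\Delta_k$), whereas the paper writes $\Delta_{k+1}^{(\mu)}=\mu I_m+\Gamma_k\bigl([\overline{\Delta}_k^{(\mu)}]^{-1}-\Delta_k^{-1}\bigr)\Gamma_k^{T}$ and applies the lemma with $G=\overline{\Delta}_k^{(\mu)}$; both rest on the same two positive definiteness facts.
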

\smallskip{}

\begin{proof}
We prove the positive definiteness of $\Delta_{k+1}^{(\mu)}$ by induction.
For $k=0$,
\[
\Delta_{1}^{(\mu)}=\Delta_{1}-\overline{\Delta}_{k}^{(\mu)}=\mu I_{m.}
\]

Suppose now that $\Delta_{k}^{(\mu)}=\Delta_{k}-\overline{\Delta}_{k}^{(\mu)}$
is positive definite. Since $\Delta_{k}$ and $\overline{\Delta}_{k}^{(\mu)}$
are positive definite, we know by Lemma~\ref{l-inverses} that
\[
[\overline{\Delta}_{k}^{(\mu)}]^{-1}-\Delta_{k}^{-1}
\]
is positive definite. Therefore,
\[
\Delta_{k+1}^{(\mu)}=\mu I_{m}+\Gamma_{k}\left([\overline{\Delta}_{k}^{(\mu)}]^{-1}-\Delta_{k}^{-1}\right)\Gamma_{k}^{T}
\]
is positive definite as well.

We now prove the result for $\Theta_{k}^{(\mu)}-\Theta_{k}$. Since
$R_{k}^{T}R_{k}=\Phi_{k}^{T}\Phi_{k}$,
\[
\Upsilon_{k}=\Phi_{k}^{-1}\Delta_{k+1}^{-1}\Phi_{k},\quad\mbox{and}\quad\Upsilon_{k}^{(\mu)}=\Phi_{k}^{-1}[\Delta_{k+1}^{(\mu)}]^{-1}\Phi_{k}
\]
we get
\begin{eqnarray*}
\Theta_{k}^{(\mu)}-\Theta_{k} & = & \Phi_{k}^{T}\left([\Delta_{k+1}^{(\mu)}]^{-1}-\Delta_{k+1}^{-1}\right)\Phi_{k}.
\end{eqnarray*}
Since $\Delta_{k+1}^{(\mu)}=\Delta_{k+1}-\overline{\Delta}_{k+1}^{(\mu)}$
as well as $\Delta_{k+1}$ and $\overline{\Delta}_{k+1}^{(\mu)}$
are positive definite, $\Theta_{k}^{(\mu)}-\Theta_{k}$ is positive
definite by Lemma~\ref{l-inverses} and Sylvester's law of inertia.
\end{proof}
\smallskip{}

We finally prove results which imply that we can obtain upper
bounds.

\smallskip{}

\begin{theorem}
\label{t-GRup} Let $0<\mu<\lambda_{1}$ where $\lambda_{1}$ is the
smallest eigenvalue of $A$. Assume that the matrices $R_{k-1}$,
$k=1,\dots,\ell,$ have full column rank. Then it holds that
\begin{equation}
\left(\Theta_{k-1}^{(\mu)}-\mathfrak{E}_{k-1}\right)=\sum_{j=k}^{\ell}B_{j}^{(\mu)}+\left(\Theta_{\ell}^{(\mu)}-\mathfrak{E}_{\ell}\right)\label{eq:keymatrix}
\end{equation}
where the matrices
\[
B_{j}^{(\mu)}\equiv(\Theta_{j-1}^{(\mu)}-\Theta_{j-1})-\Theta_{j}^{(\mu)},
\]
$j=1,\dots,\ell$, are positive definite. If $R_{\ell}=0$, then the
matrices $\Theta_{k-1}^{(\mu)}-\mathfrak{E}_{k-1}$ are positive definite.
\end{theorem}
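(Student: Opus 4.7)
The plan is to first establish the identity~\eqref{eq:keymatrix} by telescoping the recurrence from Theorem~\ref{t-Thetak}, and then to prove that each $B_j^{(\mu)}$ is positive definite by exploiting the explicit formula~\eqref{eq:Gammamu1}. By Theorem~\ref{t-Thetak}, under our hypotheses $\mathfrak{E}_{j-1} = \Theta_{j-1} + \mathfrak{E}_j$; subtracting from $\Theta_{j-1}^{(\mu)}$ and inserting $\pm\,\Theta_j^{(\mu)}$ gives
\[
\Theta_{j-1}^{(\mu)} - \mathfrak{E}_{j-1} = \bigl((\Theta_{j-1}^{(\mu)} - \Theta_{j-1}) - \Theta_j^{(\mu)}\bigr) + (\Theta_j^{(\mu)} - \mathfrak{E}_j) = B_j^{(\mu)} + (\Theta_j^{(\mu)} - \mathfrak{E}_j).
\]
Iterating this one-step identity for $j = k, k+1, \dots, \ell$ immediately produces \eqref{eq:keymatrix}.

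For the positive definiteness of $B_j^{(\mu)}$, set $M_j \equiv \Theta_{j-1}^{(\mu)} - \Theta_{j-1}$ and $C_j \equiv R_j^T R_j$. Combining \eqref{eq:Gammamu1} with \eqref{eq:defC} gives $\Theta_j^{(\mu)} = C_j(\mu M_j + C_j)^{-1} M_j$, and a single algebraic rearrangement then yields
\[
B_j^{(\mu)} = M_j - C_j(\mu M_j + C_j)^{-1} M_j = \bigl((\mu M_j + C_j) - C_j\bigr)(\mu M_j + C_j)^{-1} M_j = \mu M_j (\mu M_j + C_j)^{-1} M_j.
\]
The preceding lemma (which requires $0 < \mu < \lambda_1$) guarantees that $M_j$ is symmetric positive definite for every $j = 1, \dots, \ell$, while $C_j = R_j^T R_j$ is at least symmetric positive semidefinite. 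Hence $\mu M_j + C_j$ is SPD, $(\mu M_j + C_j)^{-1}$ is SPD, and $B_j^{(\mu)}$ is SPD as a congruence of an SPD matrix through the invertible $M_j$, scaled by $\mu > 0$.

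Finally, if $R_\ell = 0$ then $B = AX_\ell$ forces $X_\ell = X$ and hence $\mathfrak{E}_\ell = 0$, while simultaneously $\Theta_\ell^{(\mu)} = (R_\ell^T R_\ell)\Upsilon_\ell^{(\mu)} = 0$. The remainder $\Theta_\ell^{(\mu)} - \mathfrak{E}_\ell$ in \eqref{eq:keymatrix} therefore vanishes, leaving $\Theta_{k-1}^{(\mu)} - \mathfrak{E}_{k-1} = \sum_{j=k}^{\ell} B_j^{(\mu)}$ as a finite sum of SPD matrices, which is itself SPD.

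The only delicate step in this plan is recognizing the algebraic rearrangement that rewrites $B_j^{(\mu)}$ as $\mu M_j(\mu M_j + C_j)^{-1} M_j$; once this compact form is in hand, symmetry and positive definiteness are immediate, and the rest of the argument is essentially bookkeeping of a telescoping sum together with a direct appeal to the preceding lemma.
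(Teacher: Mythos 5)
Your proof is correct and follows essentially the same route as the paper: a telescoping of the one-step identity from Theorem~\ref{t-Thetak}, followed by rewriting $B_j^{(\mu)}$ via \eqref{eq:Gammamu1} as $\mu M_j(\mu M_j+C_j)^{-1}M_j$ and concluding positive definiteness by congruence (Sylvester's law of inertia) with the preceding lemma supplying that $M_j$ is SPD. The only cosmetic difference is that you reach the compact form by factoring $M_j-C_j(\mu M_j+C_j)^{-1}M_j$ directly, whereas the paper first expands $\Theta_{j-1}^{(\mu)}-\Theta_{j-1}=\mu(\Theta_{j-1}^{(\mu)}-\Theta_{j-1})\Upsilon_j^{(\mu)}+\Theta_j^{(\mu)}$; the resulting expression and argument are identical.
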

\smallskip{}

\begin{proof}
We get
\begin{eqnarray*}
\left(\Theta_{k-1}^{(\mu)}-\mathfrak{E}_{k-1}\right)-\left(\Theta_{k}^{(\mu)}-\mathfrak{E}_{k}\right) & = & \Theta_{k-1}^{(\mu)}-\left(\mathfrak{E}_{k-1}-\mathfrak{E}_{k}\right)-\Theta_{k}^{(\mu)}\\
 & = & (\Theta_{k-1}^{(\mu)}-\Theta_{k-1})-\Theta_{k}^{(\mu)}=\,B_{k}^{(\mu)}
\end{eqnarray*}
so that \eqref{eq:keymatrix} holds.

We prove now positive definiteness of $B_{k}^{(\mu)}$. From \eqref{eq:Gammamu1}
we have
\begin{eqnarray*}
\Theta_{k-1}^{(\mu)}-\Theta_{k-1} & = & [\mu\,(\Theta_{k-1}^{(\mu)}-\Theta_{k-1})+R_{k}^{T}R_{k}]\Upsilon_{k}^{(\mu)}\\
 & = & \mu\,(\Theta_{k-1}^{(\mu)}-\Theta_{k-1})\Upsilon_{k}^{(\mu)}+\Theta_{k}^{(\mu)}
\end{eqnarray*}
so that
\begin{eqnarray*}
B_{k}^{(\mu)} & = & \mu\,(\Theta_{k-1}^{(\mu)}-\Theta_{k-1})\,\Upsilon_{k}^{(\mu)}\\
 & = & \mu\,(\Theta_{k-1}^{(\mu)}-\Theta_{k-1})\,[\mu\,(\Theta_{k-1}^{(\mu)}-\Theta_{k-1})+R_{k}^{T}R_{k}]^{-1}(\Theta_{k-1}^{(\mu)}-\Theta_{k-1}).
\end{eqnarray*}
Recall that $(\Theta_{k-1}^{(\mu)}-\Theta_{k-1})$ is symmetric and
positive definite. Therefore, applying Sylvester's law of inertia,
we just have to consider the matrix
\[
\mu\,(\Theta_{k-1}^{(\mu)}-\Theta_{k-1})+R_{k}^{T}R_{k}.
\]
It is the sum of a positive definite matrix $\mu\,(\Theta_{k-1}^{(\mu)}-\Theta_{k-1})$
and the matrix $R_{k}^{T}R_{k}$ which is at least semi-positive definite.
Hence, this matrix is positive definite so that the matrix $B_{k}^{(\mu)}$
has positive eigenvalues.

Finally, if $R_{\ell}=0$, then $\mathfrak{E}_{\ell}=0$ and using
\eqref{eq:keymatrix}, $\Theta_{k-1}^{(\mu)}-\Theta_{k-1}$ is a sum
of positive definite matrices.
\end{proof}
\smallskip{}

Mathematically, if the underlying block Lanczos algorithm terminates
after $q$ iterations with $A{\cal V}_{q}={\cal V}_{q}T_{q}$, then
$R_{q}=0$, and the matrices
\begin{equation}
\Theta_{k-1}^{(\mu)}-\mathfrak{E}_{k-1},\quad k=1,\dots,q,\label{eq:keym}
\end{equation}
are positive definite; see Theorem~\ref{t-GRup}. 
In finite precision
computations, one can expect that the relation
\eqref{eq:keymatrix} holds up to some small inaccuracy, despite the
loss of global orthogonality. However, one cannot usually expect that
$R_{\ell}=0$ for some index $\ell$. Using \eqref{eq:keymatrix},
it is then important to ask whether the symmetric and positive definite
matrix
\[
\sum_{j=k}^{\ell}B_{j}^{(\mu)}
\]
dominates the symmetric matrix
\[
\left(\Theta_{\ell}^{(\mu)}-\mathfrak{E}_{\ell}\right)
\]
which can eventually be indefinite, in particular if some of the error
norms reaches the level of maximum attainable accuracy. This question
is not easy to answer; see \cite[Chapter~7]{mt2024} for the analysis
of the vector case. In our numerical experience, the matrices \eqref{eq:keym}
are positive definite during the computations until the level of maximum
attainable accuracy is reached.

\smallskip{}

If the matrices \eqref{eq:keym} are positive definite, then the diagonal
entries of $\Theta_{k-1}^{(\mu)}$ give upper bounds of the squares
of the $A$-norms of the errors. However, we can obtain better upper
bounds at the previous iterations, as in \cite{mt2024} for CG. For
instance, a better upper bound at iteration $k-1$ is obtained using
\eqref{eq:better}, when replacing $\mathfrak{E}_{\ell}$ by $\Theta_{\ell}^{(\mu)}$.

\medskip{}

In Algorithm~\ref{alg-BCG-OL}, the coefficient $\Upsilon_{k-1}$
is computed as
\[
\Upsilon_{k-1}=(P_{k-1}^{T}AP_{k-1})^{-1}\Sigma_{k-1}^{T}(R_{k-1}^{T}R_{k-1}).
\]
If we use the same technique as above to compute the Gauss lower bound,
it is easy to see that the estimate at iteration $k-1$ must be
\[
(R_{k-1}^{T}R_{k-1})\Sigma_{k-1}\Upsilon_{k-1}.
\]
This can be seen in a different way. According to O'Leary \cite{ol1980b},
$X_{k}$ and $R_{k}$ are invariant regarding the choice of the $\Sigma_{j}$'s,
since they are uniquely determined by the orthogonality conditions
\eqref{eq:ortogonality}. Nevertheless, the $P_{j}$'s are not invariant,
and we can look at the relations between the coefficients of both
variants. Let us denote the variables of O'Leary's variant with a
tilde. Then, we assume that $\tilde{X}_{k}=X_{k}$ and $\tilde{R}_{k}=R_{k}$.
From the equality of the iterates, we obtain
\[
P_{k-1}\Upsilon_{k-1}=\tilde{P}_{k-1}\tilde{\Upsilon}_{k-1}.
\]
Since the residual block vectors are the same, we have
\[
\tilde{\Xi}_{k}=\Sigma_{k-1}^{-1}\Xi_{k}.
\]
This again justifies the computation of the Gauss lower bound. We
have $\tilde{P}_{0}\Sigma_{0}=R_{0}=P_{0}$, which yields $\tilde{P}_{0}=P_{0}\Sigma_{0}$.
Let us assume that $\tilde{P}_{j}=P_{j}\Sigma_{j}$ for $j\le k-1$.
Then,
\begin{eqnarray*}
\tilde{P}_{k} & = & (R_{k}+\tilde{P}_{k-1}\tilde{\Xi}_{k})\Sigma_{k}\\
 & = & (R_{k}+\tilde{P}_{k-1}\Sigma_{k-1}^{-1}\Xi_{k})\Sigma_{k}\\
 & = & (R_{k}+P_{k-1}\Xi_{k})\Sigma_{k}.
\end{eqnarray*}
It shows that $\tilde{P}_{k}=P_{k}\Sigma_{k}$. Now, we can get a
relation for the other coefficient,
\begin{eqnarray*}
\tilde{\Upsilon}_{k-1} & = & (\tilde{P}_{k-1}^{T}A\tilde{P}_{k-1})^{-1}\Sigma_{k-1}^{T}(R_{k-1}^{T}R_{k-1})\\
 & = & (\Sigma_{k-1}^{T}P_{k-1}^{T}AP_{k-1}\Sigma_{k-1})^{-1}\Sigma_{k-1}^{T}(R_{k-1}^{T}R_{k-1})\\
 & = & \Sigma_{k-1}^{-1}\Upsilon_{k-1}.
\end{eqnarray*}
Hence, $\tilde{\Upsilon}_{k-1}=\Sigma_{k-1}^{-1}\Upsilon_{k-1}$.

\smallskip{}

To obtain the Gauss-Radau bounds, it is enough to compute $\Upsilon_{k}^{(\mu)}$
as in \eqref{eq-Gammamu} where we replace $\Upsilon_{k-1}$ by $\Sigma_{k-1}\tilde{\Upsilon}_{k-1}$.
The lower and upper bounds of the $A$-norms of the errors derived
in this section are also valid when BCG is used with a preconditioner;
see \cite{ol1980b,tms2024}.

\section{Rank deficiency}

\label{s-sec8}

So far, we have assumed that there was no rank deficiency in the matrices
$R_{k}$ and $P_{k}$. However, in practical computations, these matrix
may become rank deficient. It can happen when one of the linear systems
converges faster than the other ones, but may also happen during the
iterations without any convergence. In that case, $R_{k}^{T}R_{k}$
becomes only semi-positive definite and we cannot compute the BCG
coefficients. The remedy proposed in \cite{ol1980b} is to use deflation,
that is, to decrease the block size when rank deficiency occurs, but
this has a negative impact on the efficiency of the method. It is
not easy to find which vectors are responsible for the problems. A
rank-revealing QR factorization may be needed. On this problem, see
also D.~Ruiz \cite{rui1992}, M.~Arioli, I.S.~Duff, D.~Ruiz, and
M.~Sadkane \cite{adrs1995}, A.A.~Nikishin and A.Yu.~Yeremin \cite{niye1995},
S.~Birk and A.~Frommer \cite{BiFr2014}, and H.~Ji and Y.~Li \cite{jili2017}.

\smallskip{}

Another remedy was proposed by A.~Dubrulle (1935-2016) \cite{dub2001}.
He used a change of variable to handle rank deficiency and described
several variants. Let us consider an algorithm that is called Dubrulle-R (DR-BCG)
in \cite{tms2024}. It corresponds to BCGrQ in \cite{dub2001}. In
\cite{tms2024} the algorithm is derived from O'Leary's BCG. However,
it can be derived directly from BCG by introducing another change
of variable. For the derivation of Dubrulle's variants from BCG, see
also \cite{btk2015}.

\smallskip{}

Considering Algorithm~\ref{alg-BCG}, we start from a QR factorization
of the residual block vector, $R_{k}=Q_{k}\widehat{\Phi}_{k}$, where
the matrix $\widehat{\Phi}_{k}$ is of order $m$. The inverse of
that matrix (which may eventually be singular in case of rank deficiency)
will appear in the derivation of the algorithm, but not in the final
algorithm. Even if $R_{k}$ is rank deficient, $Q_{k}$ has orthonormal
columns. We also introduce $S_{k}=P_{k}\widehat{\Phi}_{k}$. The relation
for $P_{k}$ becomes
\[
S_{k}\widehat{\Phi}_{k}=Q_{k}\widehat{\Phi}_{k}+S_{k-1}\widehat{\Phi}_{k-1}\Xi_{k}.
\]
Since $\Xi_{k}=\widehat{\Phi}_{k-1}^{-1}\widehat{\Phi}_{k-1}^{-T}\widehat{\Phi}_{k}\widehat{\Phi}_{k}$,
it yields
\begin{eqnarray*}
S_{k} & = & Q_{k}+S_{k-1}\widehat{\Phi}_{k-1}\Xi_{k}\widehat{\Phi}_{k}^{-1}\\
 & = & Q_{k}+S_{k-1}(\widehat{\Phi}_{k}\widehat{\Phi}_{k-1}^{-1})^{T}.
\end{eqnarray*}
The other BCG coefficient is
\begin{eqnarray*}
\Upsilon_{k-1} & = & [\widehat{\Phi}_{k-1}^{T}S_{k-1}^{T}AS_{k-1}\widehat{\Phi}_{k-1}]^{-1}\widehat{\Phi}_{k-1}^{T}\widehat{\Phi}_{k-1}\\
 & = & \widehat{\Phi}_{k-1}^{-1}[S_{k-1}^{T}AS_{k-1}]^{-1}\widehat{\Phi}_{k-1}.
\end{eqnarray*}
The residual relation becomes
\begin{eqnarray*}
Q_{k}\widehat{\Phi}_{k} & = & Q_{k-1}\widehat{\Phi}_{k-1}-AS_{k-1}\widehat{\Phi}_{k-1}\Upsilon_{k-1}\\
 & = & Q_{k-1}\widehat{\Phi}_{k-1}-AS_{k-1}[S_{k-1}^{T}AS_{k-1}]^{-1}\widehat{\Phi}_{k-1}.
\end{eqnarray*}
It yields
\[
Q_{k}\widehat{\Phi}_{k}\widehat{\Phi}_{k-1}^{-1}=Q_{k-1}-AS_{k-1}[S_{k-1}^{T}AS_{k-1}]^{-1}.
\]
Hence, $Q_{k}$ and $\widehat{\Phi}_{k}\widehat{\Phi}_{k-1}^{-1}$
are obtained by a QR factorization of the right-hand side.

\smallskip{}

\begin{algorithm}[ht]
\caption{Dubrulle-R BCG (DR-BCG)}
\label{alg-DBCG}

\begin{algorithmic}[1]

\STATE \textbf{input} $A$, $B$, $X_{0}$

\STATE $R_{0}=B-AX_{0}$

\STATE $[Q_{0},\widehat{\Phi}_{0}]=\mathtt{qr}(R_{0})$

\STATE $S_{0}=Q_{0}$

\FOR{$k=1,\dots$ until convergence}

\STATE $\widehat{\Pi}_{k-1}=(S_{k-1}^{T}AS_{k-1})^{-1}$

\STATE $X_{k}=X_{k-1}+S_{k-1}\widehat{\Pi}_{k-1}\widehat{\Phi}_{k-1}$

\STATE $[Q_{k},\widehat{\Psi}_{k}]=\mathtt{qr}(Q_{k-1}-AS_{k-1}\widehat{\Pi}_{k-1})$

\STATE $S_{k}=Q_{k}+S_{k-1}\widehat{\Psi}_{k}^{T}$

\STATE $\widehat{\Phi}_{k}=\widehat{\Psi}_{k}\widehat{\Phi}_{k-1}$

\ENDFOR

\end{algorithmic}
\end{algorithm}

\smallskip{}

Algorithm~\ref{alg-DBCG} describes Dubrulle's method with the notation
of this paper. In this algorithm we do not have to compute inverses
of $R_{k}^{T}R_{k}$ as in the standard BCG and there is no need for
deflation. However, it is not obvious that $S_{k-1}^{T}AS_{k-1}$
is always nonsingular. The properties of DR-BCG will be studied in
a forthcoming paper \cite{mt2025}.

\smallskip{}

To stop the algorithm we could compute $\widehat{\Phi}_{k}^{T}\widehat{\Phi}_{k}$.
The Gauss lower bounds of the error at iteration $k-1$ can be obtained
from
\begin{eqnarray*}
\Theta_{k-1}=R_{k-1}^{T}R_{k-1}\Upsilon_{k-1} & = & \widehat{\Phi}_{k-1}^{T}\widehat{\Phi}_{k-1}\widehat{\Phi}_{k-1}^{-1}[S_{k-1}^{T}AS_{k-1}]^{-1}\widehat{\Phi}_{k-1}\\
 & = & \widehat{\Phi}_{k-1}^{T}\widehat{\Pi}_{k-1}\widehat{\Phi}_{k-1},
\end{eqnarray*}
by computing the square roots of the diagonal entries. To compute
the quantity $\Theta_{k}^{(\mu)}=R_{k}^{T}R_{k}\Upsilon_{k}^{(\mu)}$
needed for the Gauss-Radau upper bound, we use \eqref{eq-Gammamu}
and replace $R_{k}^{T}R_{k}$ and $R_{k-1}^{T}R_{k-1}$ by $\widehat{\Phi}_{k}^{T}\widehat{\Phi}_{k}$
and $\widehat{\Phi}_{k-1}^{T}\widehat{\Phi}_{k-1}$.

\section{Numerical experiments}

\label{s-sec9}

For these experiments we consider examples were there is no rank deficiency
problems. BCG works well for these examples. The problems of rank deficiency
and the variants of BCG designed to avoid these problems will be considered
in \cite{mt2025}.

\smallskip{}

We first use the matrix corresponding to the discretization of the
Poisson equation with a 5-point finite difference scheme and a $30\times30$
mesh on the unit square. We have a matrix of order 900 and we took
10 random right-hand sides. The initial block vector is zero. The
value of $\mu$ is $0.0205$ when $\lambda_{1}\approx0.02052270$,
and the delay is equal to $1$ which means that, at iteration $k$,
we compute bounds at iteration $k-1$. For this example, the standard
BCG algorithm works well. The $A$-norm of the error for the first column is
shown in Figure~\ref{fig-BCG.1} as well as the Gauss and Gauss-Radau
bounds. We obtain very good bounds for the $A$-norm of the error
corresponding to the first column of the solution $X$. The Gauss
lower bound is very close to the exact $A$-norm. All columns converge
more or less similarly.

\smallskip{}

\begin{figure}[!htbp]
\centering \includegraphics[width=8cm]{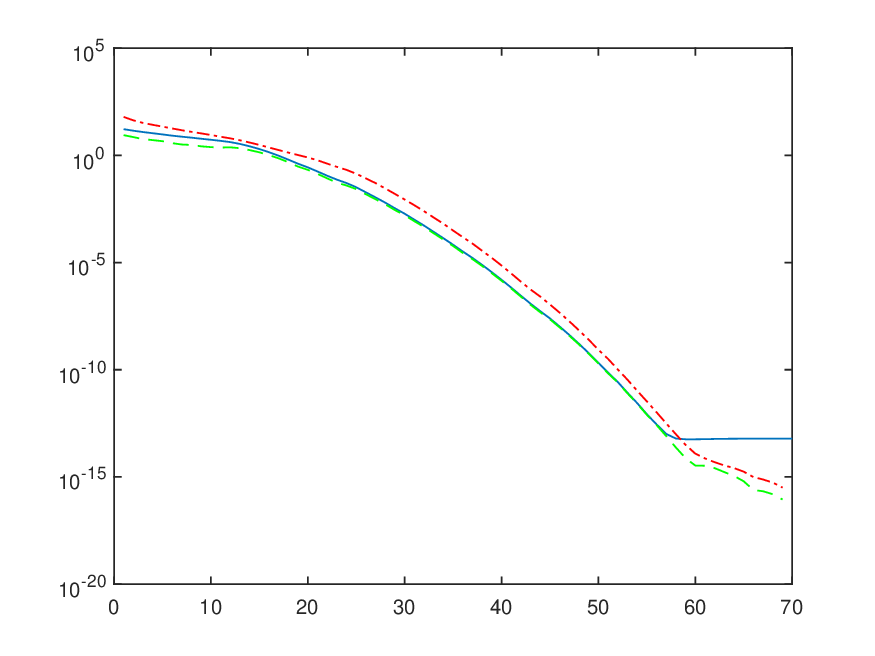} \caption{Poisson equation, standard BCG, first column, $A$-norm of the error
and bounds: Gauss (dashed green) and Gauss-Radau (dot-dashed red) }
\label{fig-BCG.1}
\end{figure}

\smallskip{}

The second example uses the matrix \texttt{bcsstk01}\footnote{available from https://sparse.tamu.edu}
of order $48$ with five random right-hand sides. We use $X_{0}=0$ and $\mu=3.417267\ 10^{3}$ when $\lambda_1\approx 3.41726756\ 10^3$.
Figure~\ref{fig-BCG.2} shows the results for standard BCG. What
is interesting to note is that it takes only 15 iterations to reach
the maximum attainable accuracy (which is not very good) when it
takes 160 iterations when using CG with the first column of $B$ as
right-hand side. With BCG, we do what can be considered as $15\times5=75$
matrix-vector products (even though it is 15 matrix-matrix products)
compared to 160 matrix-vector products for CG. At the beginning of
the iterations the Gauss lower bound is not very good, but it is tight
when the $A$-norm decreases fast. The bounds can be improved by looking
backwards for more than one iteration.

\smallskip{}

\begin{figure}[!htbp]
\centering \includegraphics[width=8cm]{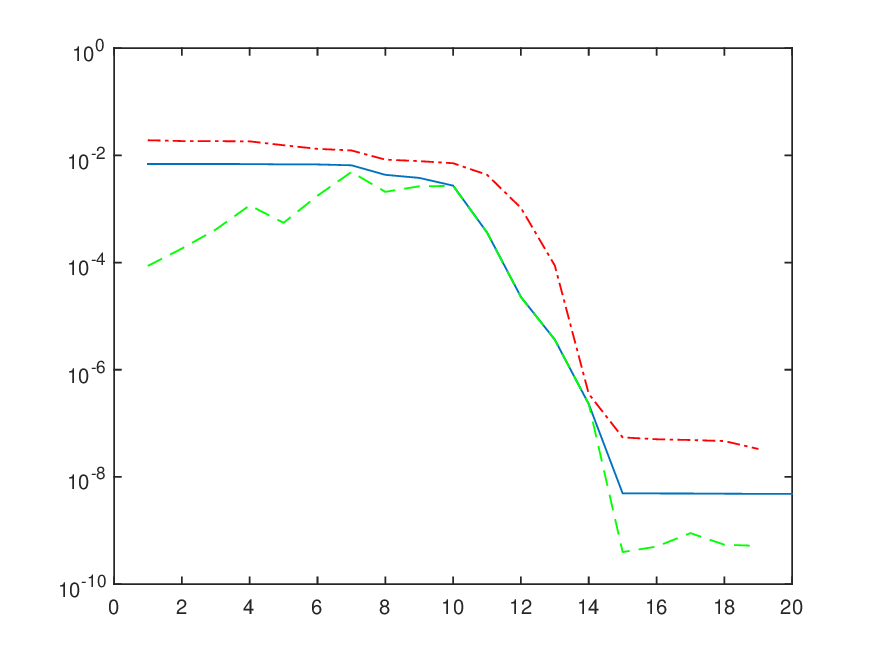} \caption{bcsstk01, standard BCG, first column, $A$-norm of the error and bounds:
Gauss (dashed green) and Gauss-Radau (dot-dashed red) }
\label{fig-BCG.2}
\end{figure}

\smallskip{}

For the third example, the matrix is \texttt{662\_bus} from the same
collection of matrices. It is of order 662. We use five random right-hand
sides, $X_{0}=0$ and $\mu=5\ 10^{-3}$ when $\lambda_1\approx 5.0472\ 10^{-3}$. Figure~\ref{fig-BCG.3} shows
the results for standard BCG. The Gauss-Radau upper bound is not very
tight, but this can be improved by taking a value of $\mu$ closer
to $\lambda_{1}$ or by looking backwards for more than one iteration.

\smallskip{}

\begin{figure}[!htbp]
\centering \includegraphics[width=8cm]{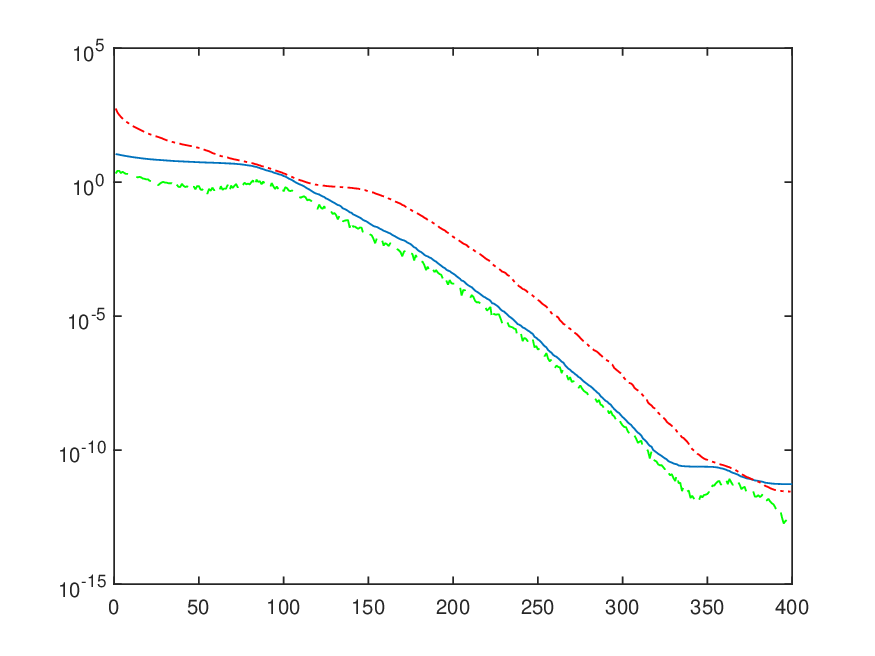} \caption{662\_bus, standard BCG, first column, $A$-norm of the error and bounds:
Gauss (dashed green) and Gauss-Radau (dot-dashed red) }
\label{fig-BCG.3}
\end{figure}

\smallskip{}

The last example uses the matrix \texttt{nos7} of order 729 and 10
random right-hand sides. We use $X_{0}=0$ and $\mu=4.1540\ 10^{-3}$  when $\lambda_1\approx 4.1541\ 10^{-3}$.
Figure~\ref{fig-BCG.3} shows the results for standard BCG. The lower
bound is oscillating because of the oscillations of the residual vector.
Despite that, the bounds decrease at the same rate as the $A$-norm
of the error.

\smallskip{}

\begin{figure}[!htbp]
\centering \includegraphics[width=8cm]{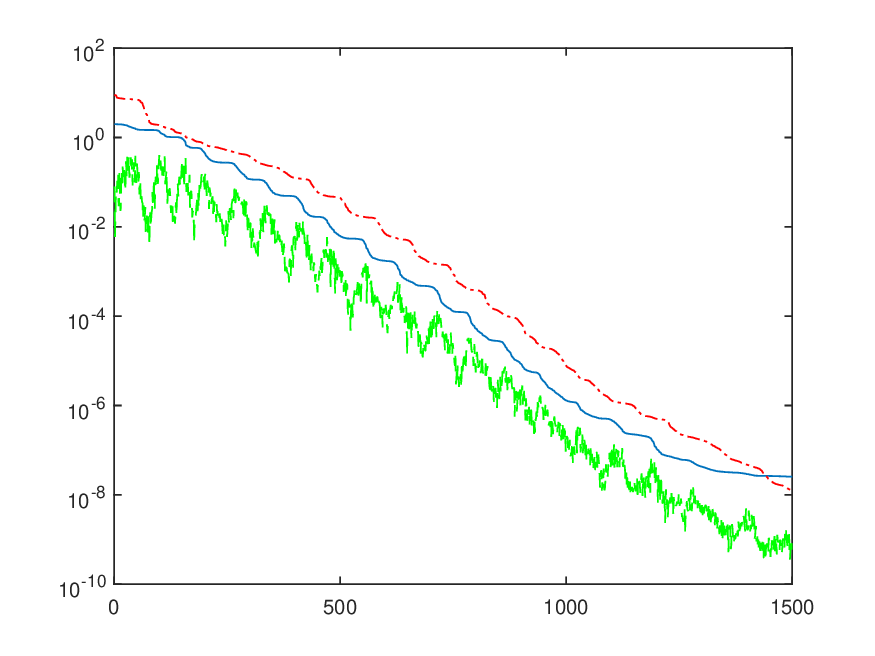} \caption{nos7, standard BCG, first column, $A$-norm of the error and bounds:
Gauss (dashed green) and Gauss-Radau (dot-dashed red) }
\label{fig-BCG.4}
\end{figure}

\smallskip{}

Numerical experiments with DR-BCG (Algorithm~\ref{alg-DBCG}) will be described in \cite{mt2025}.

\section{Conclusion}

\label{s-sec10}

In this paper we discussed in detail variants of the block conjugate gradient (BCG)
algorithm and obtained lower and upper bounds on the $A$-norm of
the error of each system. These bounds can be considered as generalizations
of the bounds known from the vector case. We derived them using
block Gauss and block Gauss-Radau quadrature rules. The bounds are
given in terms of the quantities computed in BCG as well as in terms
of the block tridiagonal matrices from the underlying block Lanczos
algorithm. We have shown how to compute these bounds in all considered
variants of BCG.

In generalizing the bounds to the block case, we faced several problems.
In particular, assuming that a positive underestimate to the smallest
eigenvalues of $A$ is available, it was necessary to prove that the remainder
of the block Gauss-Radau quadrature rule is a negative definite matrix.
This was shown algebraically, using Sylvester's law of inertia.

In this paper we have assumed that the block vectors are of full column
rank and that the coefficient matrices are nonsingular, so that all
iterations of BCG are well defined. In practical computations, 
the coefficient matrices in the classical versions of
BCG (Algorithm~\ref{alg-BCG} and Algorithm~\ref{alg-BCG-OL}) are
often close to singular. This can have a significant impact on the
efficiency of the algorithms in finite precision arithmetic, convergence
can be delayed, and the level of maximum attainable accuracy can be
poor. Deflation techniques can be used, but in our opinion they
have several disadvantages. The algorithms using deflation are
quite complicated and require the setting of various tolerances. Moreover,
their use in finite precision arithmetic is questionable; the theoretical
assumptions are often not satisfied, and the removal of vectors from
the process usually leads to a delay of convergence.

In our opinion, the most promising way to overcome the problem of
near singularity of the coefficient matrices (or rank deficiency of
the computed block vectors) is to use variants of BCG, as proposed
by A.~Dubrulle in \cite{dub2001}. In this paper we mentioned DR-BCG
(Algorithm~\ref{alg-DBCG}), which we consider one of the most important.
We plan to analyze this algorithm and some of its variants in our forthcoming work \cite{mt2025}.


\begin{thebibliography}{10}

\bibitem{adrs1995}
{\sc M.~Arioli, I.~S. Duff, D.~Ruiz, and M.~Sadkane}, {\em Block {Lanczos}
  techniques for accelerating the {Block Cimmino} method}, SIAM J. Sci.
  Comput., 16 (1995), pp.~1478--1511.

\bibitem{btk2015}
{\sc S.~Birk}, {\em Deflated shifted block {Krylov} subspace methods for
  {Hermitian} positive definite matrices}, PhD thesis, Bergischen
  Universit{\"{a}}t Wuppertal, 2015.

\bibitem{BiFr2014}
{\sc S.~Birk and A.~Frommer}, {\em A deflated conjugate gradient method for
  multiple right hand sides and multiple shifts}, Numer. Algorithms, 67 (2014),
  pp.~507--529.

\bibitem{cud1974}
{\sc J.~Cullum and W.~E. Donath}, {\em A block {Lanczos} algorithm for
  computing the $q$ algebraically largest eigenvalues and a corresponding
  eigenspace of large, sparse, real symmetric matrices}, in 1974 IEEE
  Conference on Decision and Control including the 13th Symposium on Adaptive
  Processes, IEEE, 1974, pp.~505--509.

\bibitem{dub2001}
{\sc A.~A. Dubrulle}, {\em Retooling the method of block conjugate gradients},
  Electron. Trans. Numer. Anal., 12 (2001), pp.~216--233.

\bibitem{gme2010}
{\sc G.~H. Golub and G.~Meurant}, {\em Matrices, Moments and Quadrature with
  Applications}, Princeton University Press, 2010.

\bibitem{gu1977}
{\sc G.~H. Golub and R.~Underwood}, {\em The block {Lanczos} method for
  computing eigenvalues}, in Mathematical Software {III}, Proceedings of a
  Symposium conducted by the Mathematics Research Center, the University of
  Wisconsin-Madison, March 28{-}30, 1977, J.~Rice, ed., Academic Press, 1977,
  pp.~361--377.

\bibitem{jili2017}
{\sc H.~Ji and Y.~Li}, {\em A breakdown-free block conjugate gradient method},
  BIT Numerical Mathematics, 57 (2017), pp.~379--403.

\bibitem{lund2018}
{\sc K.~Lund}, {\em A new block {Krylov} subspace framework with applications
  to functions of matrices acting on multiple vectors}, PhD thesis, Temple
  University, 2018.

\bibitem{gm2006}
{\sc G.~Meurant}, {\em The {Lanczos} and Conjugate Gradient Algorithms, from
  Theory to Finite Precision Computations}, SIAM, Philadelphia, (PA), 2006.

\bibitem{mt2024}
{\sc G.~Meurant and P.~Tich{\'y}}, {\em Error Norm Estimation in the Conjugate
  Gradient Algorithm}, SIAM, Philadelphia, (PA), 2024.

\bibitem{mt2025}
\leavevmode\vrule height 2pt depth -1.6pt width 23pt, {\em Dubrulle's variants
  of the block conjugate gradient algorithm}.
\newblock In preparation, 2025.

\bibitem{niye1995}
{\sc A.~A. Nikishin and A.~Y. Yeremin}, {\em Variable block {CG} algorithms for
  solving large sparse symmetric positive definite linear systems on parallel
  computers, {I}: {G}eneral iterative scheme}, SIAM J. Matrix Anal. Appl., 16
  (1995), pp.~1135--1153.

\bibitem{ol1980b}
{\sc D.~P. {O'Leary}}, {\em The block conjugate gradient algorithm and related
  methods}, Linear Algebra Appl., 29 (1980), pp.~293--322.

\bibitem{rui1992}
{\sc D.~Ruiz}, {\em Solution of large sparse unsymmetric linear systems with a
  block iterative method in a multiprocessor environment}, PhD thesis, INPT
  Toulouse, 1992.
\newblock CERFACS Report TH-PA-92-19649.

\bibitem{tms2024}
{\sc P.~Tich\'y, G.~Meurant, and D.~\v{S}imonov\'a}, {\em Block {CG} algorithms
  revisited}.
\newblock submitted to Numerical Algorithms, 2024.

\end{thebibliography}

\end{document}